\newcommand\org@hypertarget{}
\let\org@hypertarget\hypertarget
\renewcommand\hypertarget[2]{%
  \Hy@raisedlink{\org@hypertarget{#1}{}}#2%
} 
\newtheorem{theorem}{Theorem}[section]
\newtheorem{lemma}[theorem]{Lemma}
\newtheorem{corollary}[theorem]{Corollary}
\newtheorem{proposition}[theorem]{Proposition}
\theoremstyle{definition}
\newtheorem{definition}[theorem]{Definition}
\newtheorem{remark}[theorem]{Remark}
\newcommand{\xysquare}[8]{
\[\xymatrix{
#1 \ar@{#5}[r] \ar@{#6}[d] & #2 \ar@{#7}[d]\\
#3 \ar@{#8}[r] & #4
}\]
}
\newcommand{\bb}{\mathbb}
\newcommand{\comment}[1]{}
\renewcommand{\phi}{\varphi}
\newcommand{\roi}{\mathcal{O}}
\newcommand{\xto}{\xrightarrow}
\renewcommand{\cal}{\mathcal}
\newcommand{\indlim}{\varinjlim}
\renewcommand{\Im}{\operatorname{Im}}
\DeclareMathOperator{\Frac}{Frac}
\DeclareMathOperator{\Spec}{Spec}
\DeclareMathOperator{\Proj}{Proj}
\DeclareMathOperator{\Tor}{Tor}
\DeclareMathOperator{\essdim}{essdim}
\newcommand{\CH}{C\!H}
\newcommand{\rcoeq}[3]{\xymatrix{ #1\ar@/^3mm/[r]^f \ar@/_3mm/[r]_g & #2 \ar[l]_e\ar[r] & #3}}
\DeclareSymbolFontAlphabet{\mathbbm}{bbold}
\begin{document}
\itemsep0pt

\title{On the diagonal of quartic hypersurfaces and $(2,3)$-complete intersection $n$-folds}

\author{Elia Fiammengo and Morten Lüders}

\date{}

\maketitle

\begin{abstract}
We study the question of the existence of a decomposition of the diagonal for very general quartic and $(2,3)$-complete intersection $n$-folds. Using cycle-theoretic techniques of Lange, Pavic and Schreieder we reduce the question via a degeneration argument to the existence of such a decomposition for $n-1$-dimensional cubic hypersurfaces and their essential dimension. A result of Voisin on the essential dimension of complex cubic hypersurfaces of odd dimension (and of dimension four) then yields conditional statements that extend results of Nicaise and Ottem from stable rationality to the existence of a decomposition of the diagonal. As an application, we use a recent result of Engel, de Gaay Fortman and Schreieder on the decomposition of the diagonal for cubic threefolds  to give a new proof of the non-retract rationality of a very general complex quartic $4$-fold, originally due to Totaro, and of a very general complex $(2,3)$-complete intersection $4$-fold, originally due to Skauli.
\end{abstract}

\tableofcontents

\section{Introduction}
The goal of birational geometry is to classify algebraic varieties up to birational isomorphism.
Evidently, one of the first important questions in the field is to determine if or which varieties are birational to projective space $\bb P^n$. Such varieties are called rational. A variety $X$ 
is called stably rational if $X\times \bb P^m$ is birational to $\bb P^n$ for some $m,n\geq 0$.  A variety $X$ is called retract rational if for some $n\geq \dim X$ there exist rational maps $f:X\dashrightarrow \bb P^n$ and $g:\bb P^n\dashrightarrow X$ such that the composition $g\circ f$ is the identity. There are the following implications between these notions for proper smooth varieties:
$$\text{rational} \Rightarrow \text{stably rational} \Rightarrow\text{retract rational} \Rightarrow \text{rationally chain connected}. $$
Fundamental examples of varieties for which one would like to know if they satisfy one of the above properties are rationally connected hypersurfaces, cyclic covers and complete intersections. Early results, disproving rationality of low degree hypersurfaces (and cyclic covers) in dimension three are due to Clemens--Griffiths \cite{ClemensGriffiths1972} and Iskovskikh--Manin \cite{IskovskikhManin1971} and in higher dimension due to Koll\'ar \cite{KollarNonRatHyp1995}. Koll\'ar's method is to degenerate a variety to a variety in positive characteristic with non-trivial differential forms, implying that they are not ruled and therefore not rational.
In 2014 Voisin proposed a new idea to disprove stable rationality (and even retract rationality) realizing that such varieties admit a decomposition of the diagonal and that this property specializes in families. Therefore degenerating a variety to a singular variety which does not admit a decomposition of the diagonal, one can disprove stable rationality. This technique has since been developed further by Colliot-Thél\`ene--Pirutka \cite{CTP2016}, Totaro \cite{Totaro2016} and Schreieder \cite{Schreieder2019quadricbundles, SchreiederAMS, Schreieder2021} and has, for example, been used to show that a very general hypersurface $X\subset \bb P^{n+1}_k$ over an uncountable field $k$ of characteristic different from two (resp. equal to two) of dimension $n\geq 3$ and degree $d$ at least $\log_2 n+2$ (resp. $\log_2 n+3$) is not retract rational \cite[Cor. 1.2]{SchreiederAMS} (resp. \cite[Cor. 1.2]{Schreieder2021}).

In \cite{NicaiseOttemRefinement} Nicaise--Ottem generalised a version of the nearby cycles functor for stable birational types which was first developed by Nicaise--Shinder \cite{NicaiseShinder2019} and Kontsevich--Tschinkel \cite{KontsevichTschinkel} to take the following form: let $R$ be the valuation ring of the field $K=\bigcup_{n>0}k((t^{1/n}))$ of Puiseux series over an algebraically closed field $k$ of characteristic zero. Denoting by $\bb Z[\mathrm{SB}_F]$ the free abelian group generated by the set of stable birational equivalence classes of integral $F$-schemes, for a field $F$, there exists a unique ring morphism
$$\mathrm{Vol_{\mathrm{sb}}}: \bb Z[\mathrm{SB}_K]\to \bb Z[\mathrm{SB}_k]$$
such that for every toroidal proper $R$-scheme $\cal X$ with smooth generic fiber $X=\cal X_K$, one has that
$$\mathrm{Vol_{\mathrm{sb}}}([X]_{\mathrm{sb}})=\sum_{E\in \cal S(X)}(-1)^{\rm codim (E)}[E]_{\mathrm{sb}},$$
where $\cal S(X)$ is the set of strata of the special fiber $\cal X_k$ \cite[Cor. 3.3.5]{NicaiseOttemRefinement}. In particular, if $\mathrm{Vol_{\mathrm{sb}}}([X]_{\mathrm{sb}})\neq [\Spec k]_{\mathrm{sb}}$, then $X$ is not stably rational.
An important consequence of the existence and the structure of $\mathrm{Vol_{\mathrm{sb}}}$, is that obstructions to stable rationality may lie in lower-dimensional strata of the special fiber of toroidal degenerations, while in the previous approaches the obstructions to rationality lay in the components of the special fiber. This degeneration method makes it possible to reduce the stable irrationality of many varieties to the stable irrationality of varieties of smaller dimension. This was extensively used in \cite{NicaiseOttem2022} and \cite{Moe2023}; most notably to show that a very general quartic fivefold is not stably rational \cite[Cor. 5.2]{NicaiseOttem2022} and to improve the above-mentioned logarithmic bounds for hypersurfaces \cite[Thm. 5.2]{Moe2023}.
In \cite{PavicSchreieder2023} and \cite{LangeSchreieder2024}, Pavic--Schreieder and Lange--Schreieder developed a cycle theoretic analogue of this method which is based on the decomposition of the diagonal and also allows one to find obstructions to retract rationality in lower dimensional strata. Another advantage of their method is that it works in arbitrary characteristic. Using their cycle theoretic obstructions, many of the results of \cite{NicaiseOttem2022} and \cite{Moe2023} have been upgraded. For example, Pavic--Schreieder showed that a very general quartic fivefold does not admit a decomposition of the diagonal \cite{PavicSchreieder2023}, Lange--Skauli showed that a very general $(3,3)$-fivefold does not admit a decomposition of the diagonal \cite{LangeSkauli2023} and Lange--Schreieder proved analogues of the bounds due to Moe for the existence of a decomposition of the diagonal of hypersurfaces \cite{LangeSchreieder2024}.\par
In this article, we use the method developed in \cite{LangeSchreieder2024}, together with the closely related notion of essential dimension (Definition \ref{def_essdim}), to study $(2,d)$ complete intersections and degree $d$ hypersurfaces.
Our first main result is inspired by \cite[Cor. 7.8]{NicaiseOttem2022} and reads as follows:
\begin{theorem}\label{Theorem_2_4_complete_int_Intro}
Let $k$ be an algebraically closed field of characteristic zero and transcendence degree $\geq 1$ over the prime field. Let $n$ and $d$ be positive integers. Assume that a very general degree $d$ hypersurface $H$ in $\bb P^{n+1}_k$ does not admit a decomposition of the diagonal and satisfies $\essdim H=\dim H$. Then, a very general complete intersection of bidegree $(2,d)$ in $\bb P^{n+3}_k$ does not admit a decomposition of the diagonal.
\end{theorem}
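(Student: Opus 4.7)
The plan is to use a one-parameter degeneration that specialises a very general $(2,d)$-complete intersection to a reducible scheme whose strata are controlled by degree $d$ hypersurfaces, and then apply the cycle-theoretic criterion of Lange--Schreieder in combination with the essential dimension hypothesis on $H$.

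Concretely, I would consider a flat family $\cal X \to \Spec R$ over a DVR $R$ whose generic fibre is a very general $(2,d)$-complete intersection $Q_K \cap Y_K \subset \bb P^{n+3}_K$, and whose special fibre is obtained by degenerating the quadric $Q$ to the transverse union $H_1 \cup H_2$ of two hyperplanes meeting along a codimension-two linear subspace $L \cong \bb P^{n+1}_k$, while keeping $Y$ a very general degree $d$ hypersurface. The special fibre then decomposes as $Z_1 \cup Z_2$, where each component $Z_i := H_i \cap Y$ is a very general degree $d$ hypersurface of dimension $n+1$ in $H_i \cong \bb P^{n+2}_k$, and the two components meet along $W := L \cap Y$, a very general degree $d$ hypersurface of dimension $n$ in $L \cong \bb P^{n+1}_k$. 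In particular, $W$ is a representative of the very general hypersurface $H$ from the hypothesis, and by standard modifications (blow-ups, semistable-type reduction) one may arrange the total space to fit the framework of the cycle-theoretic specialisation machinery.

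By the Lange--Schreieder specialisation criterion for the decomposition of the diagonal, if the generic fibre admits such a decomposition, then the induced obstruction classes on the strata of a suitable model of the special fibre must be compatibly trivial. The two top-dimensional strata $Z_i$ are very general degree $d$ hypersurfaces of dimension $n+1$, for which a priori the decomposition of the diagonal is open. This is exactly where the essential dimension hypothesis $\essdim H = \dim H$ intervenes: combined with the results relating essential dimension to cycle-theoretic incompressibility recalled earlier in the paper, it implies that any trivialisation of the obstruction on $Z_i$ through a lower-dimensional variety must ultimately factor through a general hyperplane section, which under the specialisation agrees with $W$. This reduces the vanishing requirement on the $(n+1)$-dimensional strata to a vanishing requirement on the $n$-dimensional stratum $W$.

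Finally, since $W$ is a very general degree $d$ hypersurface in $\bb P^{n+1}_k$, the assumption that such a hypersurface does not admit a decomposition of the diagonal yields the desired contradiction. The most delicate point is the middle step: one must align the cycle-theoretic obstruction on $Z_i$ with that on $W$, using the incompressibility of $H$ encoded by $\essdim H = \dim H$, so that the top-dimensional strata do not prematurely absorb the obstruction before it descends to the $n$-dimensional one. This alignment via essential dimension is precisely what allows the theorem to upgrade the stable rationality results of Nicaise--Ottem to statements about the decomposition of the diagonal.
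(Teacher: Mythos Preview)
Your degeneration is broadly the same as the paper's---degenerate the quadric to a union of two hyperplanes so that the special fibre is $Y_0\cup Y_1$ with $Y_0\cap Y_1=Z$ a degree $d$ hypersurface in $\bb P^{n+1}$---but the argument you sketch after that point has a genuine gap. In the Lange--Schreieder framework the relevant obstruction map is
\[
\Psi^\Lambda_{Y^\circ_L}\colon \CH_1(Y_0^\circ\times_k L,\Lambda)\oplus \CH_1(Y_1^\circ\times_k L,\Lambda)\longrightarrow \CH_0(Z^\circ\times_k L,\Lambda),
\]
and one needs its cokernel to be nontrivial. If the $Y_i$ are, as you propose, \emph{very general} degree $d$ hypersurfaces of dimension $n+1$, there is no mechanism to control $\CH_1(Y_i^\circ\times_k L)$, and no reason $\Psi$ should fail to be surjective. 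The essential dimension hypothesis concerns $\CH_0$ of the $n$-dimensional hypersurface $H$; it says nothing about $\CH_1$ of an $(n+1)$-dimensional stratum, and the sentence about ``any trivialisation of the obstruction on $Z_i$ \dots\ must factor through a general hyperplane section'' is not a statement that follows from $\essdim H=\dim H$.

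What the paper actually does is different in two essential ways. First, the degree $d$ equation is \emph{not} kept very general: it is taken of the special ``double cone'' shape $c+x_0^{d-1}x_{n+2}+x_0^{d-1}x_{n+3}+\alpha c_1$, so that each component $Y_i$ specialises (as $\alpha\to 0$) to a hypersurface of the form $P\cdot x_{n+j}+H=0$ treated in Lemma~\ref{Lemma_simple_Chow}. After removing $\{x_0=0\}$ one then has $\CH_1(Y_{i,\alpha=0}^\circ\times_k L,\Lambda)=0$, so the source of $\Psi$ vanishes and the obstruction is forced entirely onto $Z$. Second, the essential dimension hypothesis is invoked on $Z$, not on the $Y_i$: the point is that to run the machinery one must excise a closed subset $W_Z\subset Z$ of positive dimension (containing $\{x_0=0\}\cap Z$ and the singular locus), and the condition $\essdim Z=\dim Z$ is exactly what guarantees, via Lemma~\ref{Lemma_ess_dim}, that $\Tor^{\bb Z}(Z,W_Z)>1$ despite $W_Z$ not being zero-dimensional. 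Without the special choice of $Y_i$ the first step fails; without the essential dimension hypothesis on $Z$ the second step fails.
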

Using calculations of the essential dimension of cubics due to Voisin \cite{Voisin_Cubics2017} and a recent result by Engel--de Gaay Fortman--Schreieder showing that a very general cubic threefold does not admit a decomposition of the diagonal \cite{EFS2025}, we obtain the following corollary:
\begin{corollary} \label{Corollary_main_Intro}
Let $k=\bb C$.
\begin{enumerate}
    \item A very general complete intersection of bidegree $(2,3)$ in $\bb P^{6}_k$ does not admit a decomposition of the diagonal and is therefore not retract rational.
        \item Assume that a very general cubic fourfold  (resp. $n$-fold for $n$ odd) does not admit a decomposition of the diagonal. Then a very general complete intersection of bidegree $(2,3)$ in $\bb P^{7}_k$ (resp. $\bb P^{n+3}_k$) does not admit a decomposition of the diagonal and is therefore not retract rational.
\end{enumerate}
\end{corollary}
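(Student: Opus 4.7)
The plan is to deduce both parts of Corollary \ref{Corollary_main_Intro} by directly applying Theorem \ref{Theorem_2_4_complete_int_Intro} with $d=3$, feeding in known (or assumed) non-decomposition results for cubic hypersurfaces together with Voisin's essential dimension computations. In each case the two hypotheses of the theorem have to be checked for a very general cubic $n$-fold $H\subset\bb P^{n+1}_{\bb C}$: (a) $H$ does not admit a decomposition of the diagonal, and (b) $\essdim H = \dim H = n$. Since $k=\bb C$ has transcendence degree $\aleph_0$ over $\bb Q$, the base field hypothesis of Theorem \ref{Theorem_2_4_complete_int_Intro} is automatically satisfied.

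For part (i), I would take $n=3$ so that the theorem gives a statement about $(2,3)$-complete intersections in $\bb P^{6}_{\bb C}$. Condition (a) is exactly the recent result of Engel--de Gaay Fortman--Schreieder \cite{EFS2025}. Condition (b) is Voisin's theorem \cite{Voisin_Cubics2017} that a very general complex cubic hypersurface of odd dimension has maximal essential dimension (applied with $n=3$). Applying Theorem \ref{Theorem_2_4_complete_int_Intro} then yields that a very general $(2,3)$-complete intersection in $\bb P^{6}_{\bb C}$ does not admit a decomposition of the diagonal, from which non-retract-rationality follows by the standard implication (decomposition of the diagonal is a necessary condition for retract rationality).

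For part (ii), I would take either $n=4$ or $n$ odd. In both cases Voisin's results in \cite{Voisin_Cubics2017} give $\essdim H = \dim H$ for a very general cubic $H\subset\bb P^{n+1}_{\bb C}$: odd-dimensional cubics are covered by the odd-dimensional statement, and the four-dimensional case is treated separately there. The non-decomposition hypothesis (a) is taken as an assumption. Theorem \ref{Theorem_2_4_complete_int_Intro} applied with these inputs then produces the required statement for very general $(2,3)$-complete intersections in $\bb P^{n+3}_{\bb C}$, and again retract rationality fails.

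Since the corollary is essentially a packaging statement, there is no genuine obstacle: the substantive work has already been done in \cite{Voisin_Cubics2017} and \cite{EFS2025}, and Theorem \ref{Theorem_2_4_complete_int_Intro} supplies the reduction. The only thing to be careful about is that Voisin's statements are for \emph{very general} cubics of the relevant dimension, so one must check that these very-general loci match up with the very-general locus used in the hypothesis of Theorem \ref{Theorem_2_4_complete_int_Intro}, which is straightforward since the intersection of countably many Zariski-dense open subsets of the relevant parameter space is again Zariski-dense in the uncountable setting.
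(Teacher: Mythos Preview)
Your proposal is correct and matches the paper's proof essentially verbatim: both parts are obtained by applying Theorem~\ref{Theorem_2_4_complete_int_Intro} with $d=3$, using \cite{EFS2025} for the non-decomposition input in (i) and Voisin's essential dimension result (Theorem~\ref{known_ess_dim}(i)) in both parts. One tiny slip: the transcendence degree of $\bb C$ over $\bb Q$ is $2^{\aleph_0}$, not $\aleph_0$, but of course all that is needed is $\ge 1$.
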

\begin{remark}
Theorem \ref{Theorem_2_4_complete_int_Intro} for $n=d=3$, resp. Corollary \ref{Corollary_main_Intro}(i), is due to Skauli assuming $ch(k)\neq 2$ \cite{Skauli2023}. Skauli uses a degeneration to a complete intersection one of whose components is the quadric surface bundle studied in \cite{HassetPirutkaTschinkel2018}, while our obstruction to rationality lies in a lower dimensional hypersurface; see below for more details.
For more results on the stable and retract rationality of complete intersections we refer to \cite{ChatzistamatiouLevine2018}, \cite[Sec. 7]{NicaiseOttem2022} and \cite{LangeZhang2025}.    
\end{remark}

Our second main result is the following, which is inspired by \cite[Thm. 4.4]{NicaiseOttem2022}.
\begin{theorem}\label{Theorem_2_intro}
Let $k=\bb C$, $n\geq 3$ odd (or $n=4$) and $d\geq 4$. Assume that a very general cubic hypersurface in $\bb P_k^{n+1}$ does not admit a decomposition of the diagonal. Then, a very general hypersurface of degree $d$ in $\bb P^{n+2}_k$ does not admit a decomposition of the diagonal and is therefore not retract rational.
\end{theorem}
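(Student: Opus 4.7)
The plan is to argue by the cycle-theoretic degeneration method of Lange--Schreieder \cite{LangeSchreieder2024}, as an analogue at the level of the decomposition of the diagonal of the strategy used by Nicaise--Ottem in \cite[Theorem 4.4]{NicaiseOttem2022} for the stable birational volume, with Voisin's computation of $\essdim$ supplying the essential-dimension input for free.

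Concretely, I would degenerate a very general degree-$d$ hypersurface $X\subset\bb P^{n+2}_k$ to the reducible divisor
\[
Y=H_1\cup\cdots\cup H_{d-3}\cup V,
\]
where $H_1,\dots,H_{d-3}\subset\bb P^{n+2}_k$ are very general hyperplanes and $V\subset\bb P^{n+2}_k$ is a very general cubic hypersurface. Such a degeneration is realised, for instance, as the family over a DVR $R$ with uniformiser $t$ given by $\{tF_d+L_1\cdots L_{d-3}\,g_3=0\}$, where $F_d$ is a very general degree-$d$ form, $L_i$ is the linear form defining $H_i$, and $g_3$ defines $V$. After the standard toroidal/semi-stable modification, the special fiber fits the framework of \cite{LangeSchreieder2024}.

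The decisive stratum is $H_i\cap V$, which is a very general cubic hypersurface in $H_i\cong\bb P^{n+1}_k$, hence of dimension exactly $n$. By the hypothesis of the theorem it admits no decomposition of the diagonal, and since $n$ is odd or $n=4$, Voisin's theorem \cite{Voisin_Cubics2017} provides the equality $\essdim(H_i\cap V)=n=\dim(H_i\cap V)$. The Lange--Schreieder cycle-theoretic degeneration theorem should then transfer this non-decomposition to the generic fiber, yielding that a very general degree-$d$ hypersurface in $\bb P^{n+2}_k$ admits no decomposition of the diagonal; non-retract-rationality is then a formal consequence.

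The main technical task is to verify that the Lange--Schreieder hypotheses are satisfied in this configuration, i.e.\ that the contribution of $H_i\cap V$ to the obstruction class is not cancelled by the contributions of the other strata of $Y$ (the top-dimensional components $H_i$ and $V$, the intersections $H_i\cap H_j\cong\bb P^n$, the remaining strata $H_j\cap V$ for $j\neq i$, and the triple intersections $H_i\cap H_j\cap V$). The essential-dimension equality $\essdim(H_i\cap V)=\dim(H_i\cap V)$ is precisely what rules out cancellation through lower-dimensional subvarieties, playing the same role here as it does in the proof of Theorem \ref{Theorem_2_4_complete_int_Intro}; carrying out this stratum-by-stratum bookkeeping inside the Lange--Schreieder cycle calculus is the principal obstacle.
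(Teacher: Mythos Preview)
Your degeneration is the natural Nicaise--Ottem one, but it does not fit the Lange--Schreieder machinery as stated, and the paper takes a genuinely different route precisely to get around this.

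There are two concrete obstacles. First, for $d\geq 5$ your special fibre $H_1\cup\cdots\cup H_{d-3}\cup V$ has many triple (and higher) intersections, whereas Theorem~\ref{Theorem_LS_4.1} is formulated only for snc special fibres \emph{without} triple intersections; invoking a ``standard toroidal/semi-stable modification'' does not remove triple intersections, and there is no version of the obstruction map in \cite{LangeSchreieder2024} that handles deeper strata. Second, and more seriously, even for $d=4$ (where your special fibre $H_1\cup V$ is snc with no triple intersections), the obstruction map has source $\CH_1(H_1^\circ\times L,\Lambda)\oplus\CH_1(V^\circ\times L,\Lambda)$. You can kill the first summand by removing a hyperplane from $H_1\cong\bb P^{n+1}$, but $V$ is a cubic $(n+1)$-fold and there is no reason for $\CH_1(V^\circ\times L,\Lambda)$ to vanish for $L=k(Z)$; nor is there any evident way to show that the restriction $\CH_1(V^\circ_L,\Lambda)\to\CH_0(Z^\circ_L,\Lambda)$ misses $\delta_Z$. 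The essential-dimension hypothesis $\essdim Z=\dim Z$ only protects $\delta_Z$ from cycles supported on proper closed subsets of $Z$, not from restrictions of $1$-cycles on an ambient cubic. So your ``stratum-by-stratum bookkeeping'' is not a routine verification but the whole problem.

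The paper avoids both issues by \emph{not} degenerating the hypersurface directly. Instead it passes to a $(2,d)$ complete intersection $\cal X=\{H=F=0\}\subset\bb P^{n+3}_R$ with $F=x_{n+2}x_{n+3}+tf$ and $H=gc+Mx_{n+2}+Lx_{n+3}$, whose geometric generic fibre is shown to be \emph{birational} to a degree~$d$ hypersurface $Q\subset\bb P^{n+2}$ (Lemma~\ref{(2,d)birational(d)}). The special fibre then has exactly two components $Y_0,Y_1$, both of the cone-like shape $\{Px_{n+2}+H'=0\}$ to which Lemma~\ref{Lemma_simple_Chow} applies, so $\CH_1(Y_i^\circ\times L,\Lambda)=0$ for every~$L$; their intersection $Z^\circ$ is an open in the cubic $n$-fold $\{c=0\}\subset\bb P^{n+1}$, and there are no triple intersections. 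Now the obstruction map is identically zero and the essential-dimension hypothesis on the cubic $n$-fold does the rest. The birational-to-complete-intersection trick is the key idea you are missing.
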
 

For the proof of Theorems \ref{Theorem_2_4_complete_int_Intro} and \ref{Theorem_2_intro} we use the cycle-theoretic obstructions recently developed by Lange--Schreieder \cite{LangeSchreieder2024}. These are slightly more flexible than the ones developed by Pavic--Schreieder \cite{PavicSchreieder2023} and use the notion of a relative decomposition of the diagonal; see the next section. The latter notion is closely related to the concept of the essential dimension of a scheme, which explains why it appears in Theorem \ref{Theorem_2_4_complete_int_Intro}; it also accounts for the restriction on $n$ in Theorem \ref{Theorem_2_intro} (by Theorem \ref{known_ess_dim}).  The proof of Theorem \ref{Theorem_2_4_complete_int_Intro} and Theorem \ref{Theorem_2_intro} closely follows the ideas of \cite[Sec. 5]{LangeSchreieder2024} with the notion of essential dimension as a new input.

The strategy of our argument can be summarised as follows: firstly, by a degeneration argument, the nonexistence of a decomposition of the diagonal of a hypersurface of degree $d$ and dimension $n$ implies the nonexistence of a decomposition of the diagonal for a $(2,d)$-complete intersection of dimension $n+1$. Secondly, this complete intersection can be shown to be birational to a degree $d+1$ hypersurface of dimension $n+1$:
\begin{equation*}\label{equation_argument}
\deg H=d,\;\dim H= n \rightsquigarrow (2,d)\text{-CI},\; \dim n+1 \rightsquigarrow \deg H=d+1,\;\dim H= n+1.    
\end{equation*}
This goes under the slogan ``raising the degree and the dimension'' and we should mention that just ``raising the degree'' is also possible using simpler degenerations (see \cite[Lem. 2.4]{Totaro2016}). The difference to the argument in Lange--Schreieder is firstly that in loc. cit. they assume $d\geq 4$ and start the induction argument with Schreieder's singular hypersurfaces and a calculation of their torsion order with respect to specific subschemes. Secondly, in the second step  of the argument they show that their complete intersection is birational to a degree $d$ hypersurface of dimension $n+1$ which means ``raising the dimension without raising the degree''. It is doubtful that this is possible in the case $d=3$. Thirdly, they calculate the torsion order with respect to the closed subschemes needed in their constructions, while we recur to calculations of the essential dimension of the objects involved.

The difference between Sections 5 and 6 is the following: in Section 5 we carry out the first step in (1) and immediately produce a smooth $(2,d)$ complete intersection which does not admit a decomposition of the diagonal. In Section 6 we carry out the same argument, but in a slightly more involved degeneration, which gives us a singular $(2,d)$ complete intersection which allows for the second step in (\ref{equation_argument}), i.e. produces a singular hypersurface with no decomposition of the diagonal w.r.t. some closed subset. Finally, a very general (smooth) hypersurface specialises to the latter singular one.

\begin{remark}
    The torsion order of a smooth complex cubic hypersurface $X$ of dimension $n\geq 2$ is $2$. One way to see this is the following: by \cite[Thm. 2.2]{AuelCTParimala2013}, $X$ has a unirational parametrization $f:\bb P^n_{\bb C} \dashrightarrow X$ of degree $2$. By \cite[Cor. 7.12]{Schreiedersurvey} we have that $\Tor(X)|\deg f$ and therefore if $X$ does not admit a decomposition of the diagonal, then $\Tor(X)=2$. Our proof of Theorem \ref{Theorem_2_intro} therefore shows that under the given assumptions the torsion order of a very general hypersurface of degree $d\geq 4$ is divisible by $2$. This is known to hold above a logarithmic bound by \cite[Thm. 1.1]{Schreieder2021} and \cite[Thm. 1.3]{LangeSchreieder2024}. 
\end{remark}
\begin{remark}
    Recently, a proof that the very general complex cubic fourfold  is not rational has been proposed \cite[Thm. 6.8]{KKPY2025}, but it is not currently known if these varieties admit a decomposition of the diagonal.
\end{remark}

\paragraph{Acknowledgements.} We are grateful to Jan Lange and Stefan Schreieder for answering questions we had about their work and for very helpful comments and suggestions. The second author is funded by the Deutsche Forschungsgemeinschaft (DFG, German Research Foundation) – project $557768455$.

\section{A cycle-theoretic obstruction to the existence of a relative decomposition of the diagonal}
In this section we review a method due to Lange--Schreieder \cite{LangeSchreieder2024} which can be used to show that a variety does not admit a decomposition of the diagonal relative to a closed subset. The obstruction is formulated in terms of an obstruction map, a variant of which was first developed by Pavic--Schreieder \cite{PavicSchreieder2023}. We also cite facts about the relative torsion order which we will need in the next sections from \cite{LangeSchreieder2024}. 

\begin{definition}\label{Def_relative_decomposition}
    Let $X$ be a variety over a field $k$ and let $\Lambda$ be a ring. Let $\delta_X$ denote the image of the diagonal $\Delta\subset X\times X$ in $\CH_0(X_{k(X)},\Lambda)=\indlim_{V\subset X}\CH_{\dim X}(X\times V,\Lambda)$. Here $\CH_0(X_{k(X)},\Lambda):=\CH_0(X_{k(X)})\otimes_{\bb Z}\Lambda$. We say that $X$ \textit{admits a $\Lambda$-decomposition of the diagonal relative to a closed subset $W\subset X$} if
    $$\delta_X\in \Im (\CH_0(W_{k(X)},\Lambda)\to \CH_0(X_{k(X)},\Lambda)).$$
\end{definition} 
Let the notation be as in the definition and let $U=X-W$. Then by the localisation exact sequence
$$\CH_0(W_{k(X)},\Lambda)\to \CH_0(X_{k(X)},\Lambda)\xto{j^*} \CH_0(U_{k(X)},\Lambda)\to 0$$
the condition that $X$ admits a $\Lambda$-decomposition of the diagonal relative to $W$ is equivalent to $\delta_X$ being in the kernel of $j^*$.
\begin{remark}
 By the localisation sequence there exists some zero-dimensional closed subset $W\subset X$ relative to which it admits a decomposition of the diagonal iff $X$ admits a \textit{decomposition of the diagonal} in the following sense: there exists a zero-cycle $z\in \CH_0(X)$ and a cycle $Z\in Z_{\dim X}(X\times_k X)$ whose support does not dominate the second factor of $X\times_k X$ such that
 $$e\cdot\Delta_X=z\times X+Z\in \CH_{\dim X}(X\times_k X)$$
 for $e=1$. The smallest integer $e$ such that a decomposition as the above exists is also called the \textit{torsion order} of $X$ and denoted by $\Tor(X)$.
\end{remark}

\begin{definition}
    The $\Lambda$-\textit{torsion order} of $X$ relative to a closed subset $W\subset X$, denoted by $\Tor^\Lambda(X,W)$, is the order of the element 
    $$\delta_X|_U=\delta_U\in \CH_0(U_{k(X)},\Lambda).$$
\end{definition}

\begin{remark}\label{Remark_LS_3.4}
    Note that $\Tor^\Lambda(X,W)= \Tor^\Lambda(U,\emptyset).$
Furthermore, $\Tor^\Lambda(X,W)=1$ iff $X$ admits a $\Lambda$-decomposition of the diagonal relative to $W$ \cite[Rem. 3.4]{LangeSchreieder2024}.
\end{remark}

The torsion order has the following important properties which we will need:
\begin{lemma}\label{Lemma_LS_3.6} \cite[Lem. 3.6]{LangeSchreieder2024}
    Let $X$ be a variety over a field $k$ and let $W\subset X$ be closed. Then the following hold:
    \begin{enumerate}
        \item[(a)] For all $m\in \bb Z$, $\Tor^{\bb Z/m}(X,W)|\Tor^{\bb Z}(X,W)$.
        \item[(b)] Let $W'\subset W\subset X$ be closed subsets, then $\Tor^{\Lambda}(X,W)|\Tor^{\Lambda}(X,W')$. 
        \item[(c)] $\Tor(X)$ is the minimum of the relative torsion orders $\Tor^{\bb Z}(X,W)$ where $W\subset X$ runs through all closed subsets of dimension zero.
         \item[(d)] If $\deg :\CH_0(X)\to \bb Z$ is an isomorphism, then $\Tor(X)=\Tor^\bb Z(X,W)$ for any closed subset $W\subset X$ of dimension zero which contains a zero-cycle of degree $1$.
        \item[(e)] If $k=\bar k$ is algebraically closed, then $\Tor^{\Lambda}(X,W)=\Tor^{\Lambda}(X_L,W_L)$ for any ring $\Lambda$ and any field extension $L/k$.
    \end{enumerate}
\end{lemma}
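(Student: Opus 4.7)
The plan is to handle parts (a)--(e) in turn, working throughout with $U := X \setminus W$ so that, by Remark \ref{Remark_LS_3.4}, $\Tor^\Lambda(X,W)$ is just the order of $\delta_U \in \CH_0(U_{k(X)},\Lambda)$.

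Parts (a) and (b) will reduce to functoriality. For (a), I would use that the change-of-coefficient surjection $\CH_0(U_{k(X)}) \twoheadrightarrow \CH_0(U_{k(X)},\bb Z/m)$ sends $\delta_U$ to $\delta_U$, so any integral annihilator of $\delta_U$ descends to a $\bb Z/m$-annihilator, giving the desired divisibility. For (b), writing $U \subset U' := X \setminus W'$, the open restriction $\CH_0(U'_{k(X)},\Lambda) \to \CH_0(U_{k(X)},\Lambda)$ again sends $\delta_{U'}$ to $\delta_U$, so any $\Lambda$-annihilator of $\delta_{U'}$ annihilates $\delta_U$.

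For (c) and (d), I would translate between the relative and global formulations using the presentation $\CH_0(X_{k(X)},\Lambda) = \indlim_{V \subset X} \CH_{\dim X}(X \times V, \Lambda)$. An equality $e \cdot \delta_U = 0$ in the limit spreads out, after shrinking $V$, to $e \cdot \Delta_X = z \times X + Z$ in $\CH_{\dim X}(X \times X)$ with $z$ a zero-cycle supported on $W$ and $Z$ supported in $X \times (X \setminus V)$, hence not dominating the second factor; running this equivalence in both directions yields (c). For (d), given a global decomposition $e \cdot \Delta_X = z \times X + Z$ with $z \in \CH_0(X)$ and $W$ containing a degree-$1$ cycle $z_0$, the isomorphism $\deg$ forces $z = \deg(z)\cdot z_0$ in $\CH_0(X)$; applying the flat pullback $p_1^* \colon \CH_0(X) \to \CH_{\dim X}(X \times X)$ converts this identity into $z \times X = \deg(z) \cdot z_0 \times X$ in $\CH_{\dim X}(X \times X)$, producing a decomposition whose zero-cycle is supported on $W$. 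Together with (c) this gives equality.

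For (e), base change immediately gives $\Tor^\Lambda(X_L,W_L) \mid \Tor^\Lambda(X,W)$, and the reverse direction is the only delicate step. I expect to argue that any witness to $e \cdot \delta_{U_L} = 0$ involves only finitely many cycles and finitely many rational equivalences on suitable $X \times V$, and hence is already defined over some finitely generated $k$-subalgebra $A \subset L$; after inverting a single element of $A$, the relation spreads flatly over $\Spec A$, and specialization at any $k$-rational closed point of $\Spec A$ (which exists since $k = \bar k$, by the Nullstellensatz) produces the same relation over $k$. The main obstacle, though a standard one, will be arranging this spreading-out so that it is compatible with the filtered colimit defining $\CH_0(U_{k(X)},\Lambda)$ and with the cycle class representing $\delta_U$; once this bookkeeping is in place, the specialization argument should go through without further surprises.
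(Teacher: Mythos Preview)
The paper does not supply its own proof of this lemma; it is quoted from \cite[Lem.~3.6]{LangeSchreieder2024} and stated without argument, so there is nothing here to compare your proposal against. That said, your outline is correct and is essentially the standard proof: (a) and (b) are formal consequences of functoriality; (c) and (d) come from spreading out the colimit description $\CH_0(X_{k(X)})=\indlim_V \CH_{\dim X}(X\times V)$ to an identity on $X\times X$, using that any $\dim X$-cycle supported on $W\times X$ with $\dim W=0$ is of the form $z\times X$; and (e) is the usual spreading-out and specialization argument over a finitely generated $k$-algebra $A\subset L$. The only point requiring care in (e), which you already flag, is that the open $V'\subset X_L$ over which the relation $e\cdot\delta_{U_L}=0$ is witnessed need not a priori be the base change of an open of $X$; but $V'$ together with the finitely many cycles and relations descends to some $A$, and specializing at a $k$-rational point of $\Spec A$ produces an open of $X$ over which the same relation holds.
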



Let $Y=\bigcup_{i\in I}Y_i$ be an snc scheme over $k$ which has no triple intersections and fix a total order on $I$ denoted by $<$. Let $\Lambda$ be a ring and $Y_{ij}=Y_i\cap Y_j$ for $i,j\in I$. Then in \cite[Def. 4.1]{LangeSchreieder2024} the obstruction map
$$\Psi_Y^\Lambda:\bigoplus_{l\in I}\CH_1(Y_l,\Lambda)\to \bigoplus_{\stackrel{i,j\in I}{i<j}}\CH_0(Y_{ij},\Lambda) $$
$$(\gamma_l)_l\mapsto(\gamma_i|_{Y_{ij}}-\gamma_i|_{Y_{ij}})_{i,j}$$
is defined.

\begin{theorem}\label{Theorem_LS_4.1} \cite[Thm. 4.2]{LangeSchreieder2024}
    Let $R$ be a discrete valuation ring with fraction field $K$ and algebraically closed residue field $k$ and $\Lambda$ be a ring of positive characteristic $c\in \bb Z_{\geq 1}$ such that the exponential characteristic of $k$ is invertible in $\Lambda$. Let $\cal X\to \Spec R$ be a strictly semistable $R$-scheme with geometrically integral generic fiber $X$ and special fiber $Y$. Assume that $Y=\bigcup_{i\in I}Y_i$ has no triple intersections and fix a total order $<$ on $I$. Then 
    $$\mathrm{coker} (\Psi_{Y_L}^\Lambda:\bigoplus_{l\in I}\CH_1(Y_l\times_k L,\Lambda)\to \bigoplus_{\stackrel{i,j\in I}{i<j}}\CH_0(Y_{ij}\times_k L,\Lambda)) $$
    is $\Tor^\Lambda(\bar{X},\emptyset)$-torsion for every field extension $L/k$.
\end{theorem}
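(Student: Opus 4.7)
The proof follows the specialization strategy of Pavic--Schreieder and Lange--Schreieder. The idea is to spread out a decomposition of the diagonal on the generic fiber $\bar X$ over $R$, specialize to the special fiber $Y$, and interpret the resulting cycle-theoretic identities as saying that every class in $\bigoplus_{i<j} \CH_0(Y_{ij}\times_k L, \Lambda)$ is $e$-torsion modulo $\Im(\Psi_{Y_L}^\Lambda)$, where $e := \Tor^\Lambda(\bar X, \emptyset)$. By Lemma \ref{Lemma_LS_3.6}(e) and the fact that Chow groups commute with filtered colimits of rings, together with naturality of $\Psi$ in $L$, one reduces to the case $L = k$.

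By the definition of $\delta_{\bar X}$ as a colimit, the vanishing $e \cdot \delta_{\bar X} = 0$ is witnessed on some dense open $V \subset \bar X$ by a $(\dim X + 1)$-dimensional cycle $\Gamma$ on $\bar X \times V$ giving a rational equivalence $e \cdot [\Delta_{\bar X \times V}] \sim 0$ in $\CH_{\dim X}(\bar X \times V, \Lambda)$. The plan is to spread this out by taking closures over $R$, producing a relative rational equivalence on $\mathcal X \times_R \mathcal V$, where $\mathcal V \subset \mathcal X$ is a dense open with generic fiber $V$. Restricting to the special fiber, which sits inside $Y \times_k Y = \bigcup_{i,j} Y_i \times Y_j$, the diagonal specializes to $\bigsqcup_i \Delta_{Y_i \cap \mathcal V}$ inside the pieces $Y_i \times Y_i$, while the spread of $\Gamma$ produces cycles on the various strata $Y_l \times (Y_{l'} \cap \mathcal V)$. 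Given $\beta = (\beta_{ij}) \in \bigoplus_{i<j} \CH_0(Y_{ij}, \Lambda)$, I would extract from pushforwards of these specialized cycles along the first projection a system of $1$-cycles $\gamma_l \in \CH_1(Y_l, \Lambda)$, and show by the semistable combinatorics that $\Psi_Y^\Lambda((\gamma_l)_l) = e \cdot \beta$. The key geometric point is that the discrepancy $\gamma_i|_{Y_{ij}} - \gamma_j|_{Y_{ij}}$ records how the specialized $\Gamma$ crosses from $Y_i$ to $Y_j$ through the common face $Y_{ij}$, and this crossing contribution can be arranged, via the diagonal relation, to be exactly $e \cdot \beta_{ij}$.

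The main obstacle is the spreading-out and specialization step. Since $\mathcal X \times_R \mathcal X$ is not itself strictly semistable, one cannot directly invoke a canonical specialization map on Chow groups; one must build it using Fulton's refined intersection theory, taking closures of generic-fiber cycles and applying Gysin pullback along the regular embedding $Y \hookrightarrow \mathcal X$, possibly after moving cycles by rational equivalence to ensure proper intersection with the components $Y_l$. The multiplicities that appear at such intersections must be controlled, and this is precisely where the hypothesis that the exponential characteristic of $k$ is invertible in $\Lambda$ enters: it guarantees that any inseparability or wild-ramification multiplicity is a unit in $\Lambda$, so that the specialized cycle classes are well-defined modulo $\Lambda$-rational equivalence. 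A careful combinatorial bookkeeping of how cycles pass through the stratification of $Y$ then matches the specialized data with the formula defining $\Psi_Y^\Lambda$ and delivers the claimed preimage of $e \cdot \beta$.
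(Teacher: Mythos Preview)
This theorem is not proved in the paper; it is quoted from \cite[Thm.~4.2]{LangeSchreieder2024} and used as a black box. There is therefore no proof in the present paper to compare your attempt against.

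That said, two points in your sketch deserve comment. First, the reduction to $L=k$ via Lemma~\ref{Lemma_LS_3.6}(e) is not valid as written: that lemma asserts the invariance of $\Tor^\Lambda(\bar X,\emptyset)$ under extension of an algebraically closed base field, not the invariance of $\operatorname{coker}\Psi_{Y_L}^\Lambda$, and the Chow groups $\CH_0(Y_{ij}\times_k L,\Lambda)$ genuinely depend on $L$. The statement has to be established for each $L$ (in practice one runs the argument uniformly in $L$, or reduces to the function fields of the strata). Second, and more substantively, you do not explain how an \emph{arbitrary} class $\beta=(\beta_{ij})$ is fed into the construction. Specializing the single relation $e\cdot\Delta_{\bar X}\sim 0$ produces one fixed correspondence on (an open piece of) $Y\times_k Y$; to obtain $\gamma_l$ with $\Psi((\gamma_l))=e\cdot\beta$ one must let this correspondence \emph{act} on a lift of $\beta$ to a $1$-cycle on the total space $\cal X$, and then carry out a genuine computation with Fulton's specialization map and the intersection theory of the semistable model to identify $\gamma_i|_{Y_{ij}}-\gamma_j|_{Y_{ij}}$ with $e\cdot\beta_{ij}$. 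This computation, which is also where the no-triple-intersection hypothesis is used, is the technical heart of the Lange--Schreieder argument; your phrase ``can be arranged, via the diagonal relation, to be exactly $e\cdot\beta_{ij}$'' is precisely the content of the theorem rather than a step one may elide.
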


Choosing a relative Nagata compactification of $\cal X\to \Spec R$, we may assume that $\cal X\to \Spec R$ is a proper flat separated $R$-scheme with geometrically integral generic fiber $X$ and special fiber $Y$. Now if $W_\cal X\subset \cal X$ is a closed subscheme such that
\begin{enumerate}
    \item $\cal X^\circ:= \cal X\setminus W_\cal X$ is a strictly semistable $R$-scheme, and
    \item $Y^\circ:=Y\setminus W_Y=\bigcup_{i\in I}Y_i^\circ$ has no triple intersections,
\end{enumerate}
then the above theorem applies to $\cal X^\circ$ and says that $$\mathrm{coker} (\Psi_{Y_L^\circ}^\Lambda:\bigoplus_{l\in I}\CH_1(Y_{l}^\circ\times_k L,\Lambda)\to \bigoplus_{\stackrel{i,j\in I}{i<j}}\CH_0(Y_{ij}^\circ\times_k L,\Lambda)) $$
    is $\Tor^\Lambda(\bar{X},W_{\bar{X}})=\Tor^\Lambda(\bar{X}^\circ,\emptyset)$-torsion for every field extension $L/k$. 
Keeping these assumptions, one gets the following corollary:
\begin{corollary}\label{Corollary_LS_4.3} \cite[Cor. 4.4]{LangeSchreieder2024}
    If $\bar X$ admits a $\Lambda$-decomposition of the  diagonal relative to $W_{\bar X}$, then the map 
    $$\Psi_{Y_L^\circ}^\Lambda:\bigoplus_{l\in I}\CH_1(Y_l^\circ\times_k L,\Lambda)\to \bigoplus_{\stackrel{i,j\in I}{i<j}}\CH_0(Y_{ij}^\circ\times_k L,\Lambda)$$
    is surjective for every field extension $L/k$. 
\end{corollary}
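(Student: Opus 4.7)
The plan is to combine the hypothesis with Theorem \ref{Theorem_LS_4.1} (applied to the open subscheme $\cal X^\circ$) via the elementary translation between relative decompositions of the diagonal and the relative torsion order recorded in Remark \ref{Remark_LS_3.4}. The setup in the paragraph immediately preceding the corollary verifies that $\cal X^\circ \to \Spec R$ satisfies all hypotheses of Theorem \ref{Theorem_LS_4.1}: it is strictly semistable with special fiber $Y^\circ = \bigcup_i Y_i^\circ$ having no triple intersections, and its generic fiber is still $X$ (hence geometrically integral with geometric fiber $\bar X^\circ = \bar X \setminus W_{\bar X}$). Therefore Theorem \ref{Theorem_LS_4.1} gives that
$$\mathrm{coker}\bigl(\Psi_{Y_L^\circ}^\Lambda\bigr)\ \text{is}\ \Tor^\Lambda(\bar X^\circ,\emptyset)\text{-torsion for every field extension}\ L/k.$$

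Next, I would translate the hypothesis into a statement about the torsion order. By Remark \ref{Remark_LS_3.4}, the assumption that $\bar X$ admits a $\Lambda$-decomposition of the diagonal relative to $W_{\bar X}$ is equivalent to the assertion $\Tor^\Lambda(\bar X, W_{\bar X}) = 1$, and furthermore the same remark records the identification
$$\Tor^\Lambda(\bar X, W_{\bar X}) = \Tor^\Lambda(\bar X^\circ, \emptyset).$$
Consequently $\Tor^\Lambda(\bar X^\circ, \emptyset) = 1$, so the cokernel of $\Psi_{Y_L^\circ}^\Lambda$ is $1$-torsion, i.e.\ zero. This yields surjectivity for every $L/k$, as desired.

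There is no real obstacle beyond keeping the bookkeeping straight: the entire content lies in Theorem \ref{Theorem_LS_4.1}, and the corollary merely packages it by using the equivalence $\Tor^\Lambda(\bar X^\circ,\emptyset)=1 \Leftrightarrow \bar X$ has a $\Lambda$-decomposition of the diagonal relative to $W_{\bar X}$. The only subtle point to check — already built into the construction of $\cal X^\circ$ — is that restricting from $\cal X$ to $\cal X^\circ$ does not disturb the geometric generic fiber in a way that would change which torsion order controls the cokernel; this is exactly what the identity $\Tor^\Lambda(\bar X, W_{\bar X}) = \Tor^\Lambda(\bar X^\circ,\emptyset)$ guarantees.
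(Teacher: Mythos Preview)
Your argument is correct and matches the paper's own reasoning: the paragraph immediately preceding the corollary already records that Theorem \ref{Theorem_LS_4.1} applied to $\cal X^\circ$ yields that the cokernel is $\Tor^\Lambda(\bar X, W_{\bar X})=\Tor^\Lambda(\bar X^\circ,\emptyset)$-torsion, and the corollary is then immediate from Remark \ref{Remark_LS_3.4}. (One small slip: the generic fiber of $\cal X^\circ$ is $X^\circ = X\setminus W_X$, not $X$ itself, but this open subscheme is still geometrically integral and your identification of the controlling torsion order is correct.)
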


    \section{Essential $\CH_0$-dimension and relative decomposition of the diagonal}
 The following definition is due to Voisin.
 \begin{definition}\cite[Def. 1.2]{Voisin_Cubics2017}\label{def_essdim}
     Let $k$ be a field and $X$ be a $k$-variety. The group $CH_0(X)$ is \textit{universally supported} on a subscheme $Y\subset X$ if the pushforward is universally surjective, i.e. for any field extension $L/k$, the base change $$CH_0(Y_L)\rightarrow CH_0(X_L)$$ is surjective.   The \textit{essential $\CH_0$-dimension}, or shorter \textit{essential dimension}, of $X$, denoted by $\essdim X$, is the minimal integer $n$ such that there exists a closed $n$-dimensional subscheme $Y\subset X$ such that $CH_0(X)$ is universally supported on $Y$.
 \end{definition}
 \begin{lemma}\label{Lemma_ess_dim}
     Let $k$ be a field of characteristic zero and $X$ a smooth proper $k$-variety. Then 
     \begin{equation}\label{essdim=relativetor}
        {\essdim X}=\mathrm{min}\{\dim Y\,|\,Y\subset X\,\text{ closed and}\,\Tor^\bb Z(X,Y)=1\}.
    \end{equation}
 \end{lemma}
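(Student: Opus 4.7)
The plan is to prove both inequalities, with the easy direction by specialisation and the harder direction by a spreading-out plus correspondence-action argument in the spirit of Bloch--Srinivas.

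For the inequality $\essdim X \geq \min\{\dim Y \mid Y \subset X \text{ closed and } \Tor^{\bb Z}(X,Y) = 1\}$: if $Y \subset X$ realises $\essdim X$, so that $\CH_0(Y_L) \to \CH_0(X_L)$ is surjective for every field extension $L/k$, then in particular for $L = k(X)$ we obtain a preimage of $\delta_X$ in $\CH_0(Y_{k(X)})$, giving $\Tor^{\bb Z}(X,Y) = 1$. Hence this $Y$ also realises the right-hand side.

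For the reverse inequality, let $Y \subset X$ be closed with $\Tor^{\bb Z}(X,Y) = 1$, i.e.\ $\delta_X = \iota_*\gamma$ in $\CH_0(X_{k(X)})$ for some $\gamma \in \CH_0(Y_{k(X)})$, where $\iota\colon Y \hookrightarrow X$. Unfolding $\CH_0(X_{k(X)}) = \indlim_V \CH_{\dim X}(X \times V)$, the equality spreads out to
$$\Delta_X|_{X \times V} = \iota_*\tilde\gamma \quad \text{in} \quad \CH_{\dim X}(X \times V)$$
for some dense open $V \subset X$ and some $\tilde\gamma \in \CH_{\dim X}(Y \times V)$. Choosing any extension $\bar\gamma \in \CH_{\dim X}(Y \times X)$ of $\tilde\gamma$ and applying the localisation sequence with $W := X \setminus V$, we obtain
$$\Delta_X = \iota_*\bar\gamma + Z \quad \text{in} \quad \CH_{\dim X}(X \times X),$$
where $\iota_*\bar\gamma$ is supported on $Y \times X$ and $Z$ is supported on $X \times W$.

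Given any field extension $L/k$ and any class $\alpha \in \CH_0(X_L)$, Chow's moving lemma on the smooth quasi-projective variety $X_L$ produces a rationally equivalent representative $\alpha'$ with $|\alpha'| \cap W_L = \emptyset$. Using the correspondence action $C_*\beta := (\pi_1)_*(C \cdot \pi_2^*\beta)$, under which $\Delta_X$ acts as the identity on $\CH_0(X_L)$, we compute
$$\alpha = \alpha' = (\iota_*\bar\gamma)_*\alpha' + Z_*\alpha'.$$
The first summand lies in $\Im(\CH_0(Y_L) \to \CH_0(X_L))$ since $\iota_*\bar\gamma$ is supported on $Y \times X$, so that $\pi_1$ of the resulting $0$-cycle is contained in $Y$. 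The second summand vanishes because $|Z| \cap |\pi_2^*\alpha'| \subseteq X \times (W \cap |\alpha'|) = \emptyset$, which forces the intersection product $Z \cdot \pi_2^*\alpha'$ to be zero in the Chow ring. Therefore $\alpha \in \Im(\CH_0(Y_L) \to \CH_0(X_L))$, showing that $\CH_0(X)$ is universally supported on $Y$ and hence $\essdim X \leq \dim Y$.

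The delicate point is the interplay between the spread-out decomposition and the moving lemma: passing from the generic-point statement $\delta_X = \iota_*\gamma$ to a global identity on $X \times X$ introduces an error term $Z$ supported on $X \times W$, and replacing $\alpha$ by a representative disjoint from $W_L$ is precisely what is needed to trivialise the correspondence-theoretic contribution of $Z$. This is the integral relative version of the Bloch--Srinivas mechanism underlying the essential-dimension formalism.
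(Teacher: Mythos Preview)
Your argument for the easy inequality matches the paper's. For the harder direction you take a different, more explicit route: the paper simply invokes \cite[Lemma~3.7]{LangeSchreieder2024}, which says that for smooth $X$ the group $\CH_0((X\setminus Y)_L)$ is $\Tor^{\bb Z}(X,Y)$-torsion (hence zero here) for every $L/k$, and then reads off surjectivity of $\CH_0(Y_L)\to\CH_0(X_L)$ from the localisation sequence. You instead unfold a direct Bloch--Srinivas correspondence argument. Your approach is more self-contained; the paper's is shorter and offloads the substance to a cited lemma.

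One point to watch: you invoke Chow's moving lemma on ``the smooth quasi-projective variety $X_L$'', but the statement only assumes $X$ smooth \emph{proper}, and proper does not imply quasi-projective (e.g.\ Hironaka's threefolds). So as written your argument needs projectivity. This is harmless for every application in the paper, and the gap can be closed by replacing the moving-lemma step with a Gysin/specialisation argument: pull back the spread-out identity $\Delta_X=\iota_*\bar\gamma+Z$ along the regular embedding of a closed point $p\hookrightarrow X_L$ (which exists since $X_L$ is smooth) to obtain $[p]$ as the sum of a class pushed forward from $Y_L$ and a class supported on the fibre of $Z$ over $p$; iterating specialisation from the generic point down to $p$ then handles the case $p\in W_L$ without moving. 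This is presumably what underlies the cited Lange--Schreieder lemma and uses only smoothness.
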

 \begin{proof}
   The inequality LHS$\geq$RHS is clear: If $CH_0(X)$ is universally supported on $Y$, then taking $L=k(X)$, it follows that $X$ admits a decomposition of the diagonal with respect to $Y$, equivalently $\Tor^\bb Z(X,Y)=1$. Let $Y\subset X$ be a closed subscheme such that $\Tor(X,Y)= 1$. By \cite[Lemma 3.7]{LangeSchreieder2024}, and since $X$ is smooth, this implies that $CH_0((X-Y)_{L})$ is one-torsion for all field extensions $L/k$. In particular, the pushforward 
   \begin{equation*}
       CH_0(Y_L)\rightarrow CH_0(X_L)\rightarrow 0
   \end{equation*}
   must be surjective. Hence, $CH_0(X)$ is universally supported on $Y$ and RHS$\geq$LHS.
 \end{proof}
 The essential dimension has been computed for complex varieties in the following cases.
 \begin{theorem}\label{known_ess_dim}
 Let $k=\bb C$.
     \begin{enumerate}
         \item \cite[Thm. $1.3$]{Voisin_Cubics2017}
         Let $X$ be a very general odd (or four) dimensional cubic hypersurface that does not admit a decomposition of the diagonal, then $\essdim X=\dim X.$
         \item \cite[Prop. 2.5]{CT2017cubiques} Let $X$ a smooth projective variety of dimension at least two that satisfies $CH_0(X)\cong \bb Z$, if $X$ does not admit a decomposition of the diagonal, then
         $\essdim X\geq 2.$
         \item \cite[Thm. 0.9]{Mboro2019}
         Let $X$ be a very general Fano complete intersection threefold that does not admit a decomposition of the diagonal, then $\essdim X=3=\dim X.$
     \end{enumerate}
 \end{theorem}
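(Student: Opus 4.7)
Both assertions are cited from the literature: (i) is \cite[Thm.~1.3]{Voisin_Cubics2017} and (ii) is \cite[Thm.~0.9]{Mboro2019}. The natural strategy, which I would follow for both, is to prove the contrapositive via a Bloch--Srinivas type decomposition sharpened by the specific geometry of the variety.

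By Lemma~\ref{Lemma_ess_dim}, the conclusion $\essdim X=\dim X$ is equivalent to the statement that for every closed subscheme $Y\subsetneq X$ with $\dim Y<\dim X$, one has $\Tor^\bb Z(X,Y)>1$, i.e.\ $\CH_0(X)$ is not universally supported on $Y$. I would argue by contradiction: assume $\CH_0(X)$ is universally supported on such a $Y$, so that by Definition~\ref{def_essdim} the class $\delta_X$ lies in the image of $\CH_0(Y_{k(X)})\to \CH_0(X_{k(X)})$. Spreading this relation out yields a Bloch--Srinivas type decomposition
$$\Delta_X = Z_1+Z_2\in \CH_{\dim X}(X\times X),$$
with $Z_1$ supported on $Y\times X$ and $Z_2$ supported on $X\times D$ for some divisor $D\subset X$. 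To contradict the hypothesis, one must upgrade this to $\Delta_X = z\times X + Z$ with $z\in \CH_0(X)$ a zero-cycle and $Z$ not dominating the second factor.

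For (i), Voisin's key geometric input is the Fano variety of lines $F(X)$ and the universal line $P\subset F(X)\times X$. The cylinder homomorphism $q_*p^*$ relates cycles on $F(X)$ to cycles on $X$, and in odd dimension (or dimension $4$) its action on Hodge structures is tightly constrained---in particular, the transcendental middle cohomology of a very general cubic of these dimensions is non-trivial with a specific Hodge type. This rigidity allows one to replace the $Y\times X$ component by a zero-cycle times $X$, producing a genuine decomposition of the diagonal. For cubic fourfolds, the hyperk\"ahler structure on $F(X)$ replaces the role of the intermediate Jacobian. For (ii), Mboro's proof for Fano complete intersection threefolds is analogous in spirit, but uses the non-triviality of the intermediate Jacobian of a very general such threefold together with a careful analysis of the Abel--Jacobi map on $\CH_1$ to carry out the same upgrade.

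The main obstacle in either case is precisely this last step: upgrading the Bloch--Srinivas decomposition supported on $Y$ to one with a zero-cycle in place of $Y$. Unlike the formal portion of the argument, this step cannot be carried out abstractly; it requires the ``very general'' hypothesis and the detailed geometry of the specific family (lines on cubics, or Abel--Jacobi theory of Fano threefolds), which is why these results are available only in the cases stated.
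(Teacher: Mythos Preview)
The paper does not prove this theorem at all: it is recorded purely as a citation of \cite[Thm.~1.3]{Voisin_Cubics2017} and \cite[Thm.~0.9]{Mboro2019}, with no accompanying proof or sketch. Your proposal correctly identifies this and then goes further, outlining the Bloch--Srinivas / Fano-variety-of-lines / Abel--Jacobi mechanism behind the cited results; this is a reasonable high-level summary of the strategy in those papers, but since the present paper offers no argument of its own, there is nothing to compare your approach against.
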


Retract rational varieties admit a decomposition of the diagonal \cite[Lem. 7.5]{Schreiedersurvey}, hence their essential dimension is zero. Moreover, relating the essential dimension to the following definition by Chatzistamatiou--Levine we see that essential dimension is a stable birational invariant.
\begin{definition}\cite[Def. 1.1]{ChatzistamatiouLevine2018}\label{definition_tor_i}
    Let $k$ be a field and $X$ be $k$-scheme of finite type and of pure dimension $n$. Let $0 \leq i\leq n$ be an integer and consider the following group
    \begin{equation*}
        CH_n(X(i)\times X(n-1)):=\underset{W,D}{\varinjlim}CH_d((X-W)\times (X-D))
    \end{equation*}
    where $W,D$ vary over all closed subschemes of $X$ with $\dim_kW\leq i$ and $\dim_kD\leq n-1$. The \textit{$i^\mathrm{th}$torsion order of $X$}, denoted by $\Tor^i_k(X)\in \bb N_+\cup\{\infty\}$ is the order of the image of the class $\Delta_X\in CH_n(X\times X)$ under the morphism 
    \begin{equation*}
        CH_n(X\times X)\to  CH_n(X(i)\times X(n-1))
    \end{equation*}
    induced by the restrictions.
\end{definition}
\begin{lemma}\label{lemma_invariance_of_essdim}
    Let $k$ be a field of characteristic zero and $X$ a smooth proper $k$-variety. Then 
    \begin{equation*}
        \essdim X=\min \{i\geq 0\,|\,\Tor^{(i)}(X)=1\}.
    \end{equation*}
    In particular, the essential dimension is a stable birational invariant of smooth proper $k$-varieties. 
\end{lemma}
\begin{proof}
By Lemma \ref{Lemma_ess_dim} we have that ${\essdim X}=\mathrm{min}\{\dim Y\,|\,Y\subset X\,\text{ closed and}\,\Tor^\bb Z(X,Y)=1\}.$
Then the first claim follows from the equality 
\begin{equation*}
    \Tor^{(i)}(X)=\min \{\Tor^\bb Z(X,W)\,|\,W\subset X\,\,\text{closed and}\,\\\dim W\leq i\}
\end{equation*}
which can be seen by the commutative diagram
    \[\begin{tikzcd}
	{CH_0(X_{K(X)})} & {\indlim_{D\subset X}CH_n(X\times (X-D)}) \\
	{\indlim_{W\subset X}CH_0((X-W)_{K(X)})} & {\indlim_{W,D\subset X}CH_n((X-W)\times (X-D)})
	\arrow["\cong", from=1-1, to=1-2]
	\arrow[from=1-1, to=2-1]
	\arrow[from=1-2, to=2-2]
	\arrow["\cong", from=2-1, to=2-2]
\end{tikzcd}\]
where the horizontal maps are isomorphisms and $W$ and $D$ are as in Definition \ref{definition_tor_i}.

The second claim follows from the fact that the $i$-th torsion order is a stable birational invariant by \cite[Prop. 2.7]{ChatzistamatiouLevine2018}.
\end{proof}
\begin{remark}
  Using Lemma \ref{lemma_invariance_of_essdim} it is easy to construct Fano varieties of dimension at least four that do not admit a decomposition of the diagonal and satisfy $\essdim <n$, for example take $C$ to be a very general cubic threefold, then $\essdim C\times \bb P^1=\essdim C=3$ and $C\times \bb P^1$ does not admit a decomposition of the diagonal. It is unclear if such examples can be realized in dimension three or by Fano complete intersections of dimension at least four.
\end{remark}
    

\section{Varieties with simple Chow groups to degenerate to}
For the following lemma see also \cite[Lem. 3.11, Rem. 3.13]{LangeSkauli2023} and \cite[Lem. 5.8]{LangeSchreieder2024}.
\begin{lemma}\label{Lemma_simple_Chow}
    Let $k$ be an algebraically closed field and $X\subset \bb P^n_k=\Proj k[x_0,\dots,x_{n}]$ a hypersurface of degree $d$ given by the following equation $$
    X=\{P(x_{0},\dots,x_{n-1})x_n+H(x_0,\dots,x_{n-1})=0\}$$
    with $P\neq 0$.
    Then $X-\{P=0\}\cong \bb P_k^{n-1}-\{P=0\}$. In particular, if $\{P=0\}\subset \bb P_k^{n-1}$ contains a line, then
    $$CH_1(X-\{P=0\})=0$$
   and the natural pushforward map
    $$\CH_1(\{P=0\}\cap X)\to \CH_1(X)$$
    is surjective.
        \end{lemma}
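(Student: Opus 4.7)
The plan is to reduce both Chow-theoretic claims to the explicit isomorphism $X - \{P = 0\} \cong \bb P^{n-1}_k - \{P = 0\}$ and then apply the localization sequence for Chow groups. I would proceed in two stages.

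First, I would construct the isomorphism via projection from the point $p = [0:\cdots:0:1]$, giving the rational map
$\pi\colon X \dashrightarrow \bb P^{n-1}_k$, $[x_0:\cdots:x_n] \mapsto [x_0:\cdots:x_{n-1}]$.
On $U := X - \{P = 0\}$ the defining equation $P(x_0,\dots,x_{n-1})\,x_n + H(x_0,\dots,x_{n-1}) = 0$ uniquely recovers $x_n = -H/P$, so $\pi|_U$ is a bijection. An explicit inverse is
$$\sigma\colon \bb P^{n-1}_k - \{P = 0\} \to X, \quad [y_0:\cdots:y_{n-1}] \mapsto [P(y) y_0:\cdots:P(y) y_{n-1}:-H(y)],$$
which I would check to be a well-defined morphism landing in $X$, using homogeneity of $P$ and $H$ of degrees $d-1$ and $d$: a direct substitution shows that the defining polynomial of $X$ evaluates to $P(y)^d \cdot (-H(y)) + P(y)^d H(y) = 0$.

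Second, I would deduce the Chow statements from this isomorphism together with the assumption that some line $\ell$ is contained in $\{P = 0\} \subset \bb P^{n-1}_k$. The localization sequence
$$\CH_1(\ell) \to \CH_1(\bb P^{n-1}_k) \to \CH_1(\bb P^{n-1}_k - \ell) \to 0$$
has left map an isomorphism $\bb Z \to \bb Z$, so $\CH_1(\bb P^{n-1}_k - \ell) = 0$. Since $\bb P^{n-1}_k - \{P = 0\}$ is an open subscheme of $\bb P^{n-1}_k - \ell$, a further application of the localization sequence yields $\CH_1(\bb P^{n-1}_k - \{P = 0\}) = 0$, and hence $\CH_1(X - \{P = 0\}) = 0$ by the first step. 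The surjectivity of $\CH_1(\{P = 0\} \cap X) \to \CH_1(X)$ then drops out of the localization sequence for the closed immersion $\{P = 0\} \cap X \hookrightarrow X$ combined with this vanishing.

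The only delicate point is verifying that $\sigma$ is a genuine morphism on $\bb P^{n-1}_k - \{P = 0\}$ with image in $X$; once that is in place, everything else is routine bookkeeping with the excision/localization exact sequence for Chow groups.
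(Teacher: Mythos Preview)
Your proposal is correct and follows essentially the same route as the paper: projection from $[0:\cdots:0:1]$ with the explicit inverse (the paper writes it as $(x_0:\cdots:x_{n-1}:-H/P)$, which is your $\sigma$ after clearing denominators), followed by the localization sequence together with the fact that $\CH_1(\bb P^{n-1}_k)$ is generated by the class of any line. The only cosmetic difference is that the paper applies localization once with the closed subset $\{P=0\}\subset\bb P^{n-1}_k$, whereas you first remove the line $\ell$ and then pass to the smaller open; both yield the same conclusion.
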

        \begin{proof}
    Consider the projection away from $(0:...:0:1)$
    \begin{align*}
    \pi:X&\dashrightarrow \bb P_k^{n-1}=\{x_n=0\}\\
    (x_0:....:x_n)&\mapsto (x_0:....:x_{n-1})
    \end{align*}
   Restricting $\pi$ to $\pi|_{X-\{P=0\}}:X-\{P=0\}\rightarrow \bb P_k^{n-1}-\{P=0\}$ gives an isomorphism with inverse \begin{align*}
    \phi: \bb P_k^{n-1}-\{P=0\}&\rightarrow X-\{P=0\}.\\
    (x_0:....:x_{n-1})&\mapsto (x_0:...:x_{n-1}:-\frac{H(x_{0},\dots,x_{n-1})}{P(x_{0},\dots,x_{n-1})})
    \end{align*} 
    The second claim of the lemma then follows from the exact sequence
\[\begin{tikzcd}
	{CH_1(\{P=0\})} & {CH_1(\bb P_k^{n-1})} & {CH_1(\bb P_k^{n-1}-\{P=0\})} & 0
	\arrow[from=1-1, to=1-2]
	\arrow[from=1-2, to=1-3]
	\arrow[from=1-3, to=1-4]
\end{tikzcd}\]
and the fact that $CH_1(\bb P_k^{n-1})\cong [\gamma]\bb Z$, where $\gamma$ is a line defined over $k$ and the last claim then follows from the exact sequence
\[\begin{tikzcd}
	{CH_1(\{P=0\}\cap X)} & {CH_1(X)} & {CH_1(X-\{P=0\})} & 0.
	\arrow[from=1-1, to=1-2]
	\arrow[from=1-2, to=1-3]
	\arrow[from=1-3, to=1-4]
\end{tikzcd}\]
    \end{proof}

\section{The very general $(2,d)$ complete intersection}

\begin{theorem}\label{Theorem_2_4_complete_int}
Let $k$ be an algebraically closed field of characteristic zero and transcendence degree $\geq 1$ over the prime field. Let $n$ and $d$ be positive integers. Assume that a very general degree $d$ hypersurface $H$ in $\bb P^{n+1}_k$ does not admit a decomposition of the diagonal and satisfies $\essdim H=\dim H$. Then a very general complete intersection of bidegree $(2,d)$ in $\bb P^{n+3}_k$ does not admit a decomposition of the diagonal. 
\end{theorem}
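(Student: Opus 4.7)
The plan is to deduce the statement from Corollary \ref{Corollary_LS_4.3} applied to a strictly semistable degeneration of the very general $(2,d)$-complete intersection whose special fiber is a union of two degree $d$ hypersurfaces glued along a degree $d$ hypersurface $H\subset \bb P^{n+1}$. First I set up the family $\cal X\subset \bb P^{n+3}_R$ over a suitable DVR $R$ with residue field $k$, cut out by the two equations
\[
x_0x_1+tQ'(x_2,\ldots,x_{n+3})=0, \qquad F:=x_0A+x_1B+G=0,
\]
where $A,B$ are forms of degree $d-1$, $G$ a form of degree $d$, and $Q'$ a smooth quadric in $x_2,\ldots,x_{n+3}$, chosen sufficiently general. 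The generic fiber $\cal X_K$ is then a smooth $(2,d)$-complete intersection, while the special fiber is $Y=Y_0\cup Y_1$ with $Y_i=V(x_i,F)\subset \bb P^{n+2}$ a degree $d$ hypersurface and double locus $Y_{01}=V(x_0,x_1,G)=:H$ a degree $d$ hypersurface in $\bb P^{n+1}$. After removing a small closed subscheme $W_\cal X$ containing the non-semistable locus, $\cal X^\circ:=\cal X\setminus W_\cal X$ is strictly semistable and its special fiber has only two components, hence no triple intersections.

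The key step is to confine the image of the obstruction map $\Psi^\Lambda_{Y_L^\circ}$ using Lemma \ref{Lemma_simple_Chow}. The defining equation of $Y_0\subset \bb P^{n+2}$ is $x_1B+G$, linear in $x_1$ with coefficient $B(x_2,\ldots,x_{n+3})$ independent of $x_0,x_1$, and analogously for $Y_1$. Choosing $A$ and $B$ to be divisible by $x_2$, so that $V(A)$ and $V(B)\subset \bb P^{n+1}$ each contain a line, the lemma gives that $\CH_1(Y_0,\Lambda)$ is generated by pushforwards from the cone $V(B)\cap Y_0$, and similarly for $Y_1$. A direct computation with the projection-isomorphism of Lemma \ref{Lemma_simple_Chow} shows that under the restriction map $\CH_1(Y_i^\circ,\Lambda)\to \CH_0(Y_{01}^\circ,\Lambda)$, the ruling of the cone $V(B)\cap Y_0$ through a point $p\in V(B,G)\subset H$ is sent to $[p]$, and similarly for the rulings through $V(A,G)\subset H$. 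Hence
\[
\Im(\Psi^\Lambda_{Y_L^\circ})\ \subseteq\ \Im\!\left(\CH_0(Z_L,\Lambda)\to \CH_0(Y_{01}^\circ\times_k L,\Lambda)\right),
\]
where $Z:=(V(A)\cup V(B))\cap H$ has dimension $n-1=\dim H -1$.

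Finally, the essential-dimension hypothesis breaks this containment. By Lemma \ref{Lemma_ess_dim}, $\essdim H=\dim H$ is equivalent to $\Tor^\bb Z(H,Z)\neq 1$, so $\delta_H$ does not lie in the image of $\CH_0(Z_{k(H)},\bb Z)\to \CH_0(H_{k(H)},\bb Z)$. Combined with the assumption that $H$ itself does not admit a decomposition of the diagonal, this produces, for some prime $\ell$ and some field extension $L/k$, a class in $\CH_0(Y_{01}^\circ\times_k L,\bb Z/\ell)$ witnessing the non-surjectivity of $\Psi^{\bb Z/\ell}_{Y_L^\circ}$. Corollary \ref{Corollary_LS_4.3} then yields $\Tor^{\bb Z/\ell}(\bar X,W_{\bar X})\neq 1$, and hence $\Tor^\bb Z(\bar X,W_{\bar X})\neq 1$ by Lemma \ref{Lemma_LS_3.6}(a). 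Since the relevant $(2,d)$-complete intersections are rationally chain connected, so that $\deg:\CH_0(\bar X)\isoto \bb Z$, Lemma \ref{Lemma_LS_3.6}(b),(d) upgrades this to $\Tor^\bb Z(\bar X)\neq 1$; a standard specialisation argument then transfers the conclusion from $\bar X$ to the very general $(2,d)$-complete intersection. The hard part will be extracting $\bb Z/\ell$-coefficient non-surjectivity of $\Psi$ from the $\bb Z$-coefficient essential-dimension input, which is precisely where both hypotheses on $H$ are needed jointly, and which is the place where the argument genuinely exploits Voisin's notion of essential dimension beyond the $\bb Z$-level non-triviality of $\delta_H$.
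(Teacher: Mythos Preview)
Your overall strategy is sound and closely parallels the paper's: both set up a strictly semistable degeneration whose special fibre is two degree-$d$ hypersurfaces in $\bb P^{n+2}$ meeting along a very general degree-$d$ hypersurface in $\bb P^{n+1}$, and both feed the Lange--Schreieder obstruction map together with the essential-dimension hypothesis. The one structural difference is that the paper runs a \emph{second} degeneration, in an auxiliary parameter $\alpha$, so that after specialising $\alpha\to 0$ one has $\CH_1(Y_{i,\alpha=0}^\circ\times_k L,\Lambda)=0$ outright (Lemma~\ref{Lemma_CH=0}); you instead choose the equations so that $\CH_1(Y_i^\circ\times_k L,\Lambda)$ is visibly supported on a cone whose Gysin restriction to $H$ lands in the codimension-one locus $V(A,G)\cup V(B,G)$. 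Both routes arrive at the same inequality $\Tor^\Lambda(H,Z\cup W_H)\mid \Tor^\Lambda(\bar X,W_{\bar X})$ for some subset of $H$ of dimension $<\dim H$. Your route saves a degeneration, at the price of having to verify smoothness and semistability for slightly less generic data; the paper's double specialisation keeps every intermediate object generic and makes those checks trivial.

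The step you flag as ``the hard part'' --- extracting non-surjectivity of $\Psi$ with $\bb Z/m$-coefficients from the $\bb Z$-coefficient essential-dimension input --- is in fact short, and is exactly what the paper does in three lines. Since $H$ is smooth and Fano, $N:=\Tor^{\bb Z}(H,Z\cup W_H)$ is finite (it divides $\Tor(H)$, which divides $d!$). Set $\Lambda=\bb Z/N$. By \cite[Lemma~3.7]{LangeSchreieder2024} applied to the smooth variety $H$, the group $\CH_0\big((H\setminus(Z\cup W_H))_{k(H)}\big)$ is already $N$-torsion, hence coincides with its $\Lambda$-coefficient version; thus $\Tor^{\Lambda}(H,Z\cup W_H)=N>1$, which is precisely the non-surjectivity of $\Psi^{\Lambda}_{Y^\circ_{k(H)}}$ you need. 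The rest of your outline then goes through via Lemma~\ref{Lemma_LS_3.6}(a),(b),(d). Two minor points: you should say why $\CH_1$ of the cone is generated by rulings (it is, since the cone minus its vertex is an $\bb A^1$-bundle over the base), and making $A,B$ divisible by $x_2$ is unnecessary --- a general degree $d-1$ hypersurface in $\bb P^{n+1}$ already contains a line in the Fano range, and using general $A,B$ will ease the smoothness verifications you still owe.
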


In order to prove the theorem, we begin by defining a family which will allow us to carry out a specialization argument. The definition is inspired by the double cone construction of Lange--Schreieder \cite[Sec. 5, (5.3)]{LangeSchreieder2024}, which in turn is inspired by a construction of Moe in toric geometry \cite{Moe2023}, and the degeneration given in \cite[Proof of Thm. 7.5]{NicaiseOttem2022}. To be more precise, Nicaise--Ottem study a family of the form $tp_1-zw=0=p_2$ and the double cone construction refers to a special choice of $p_2$.
\begin{definition}\label{def_degenration_family_for_(2,d)}
Let $k_0$ be an algebraically closed field of characteristic zero and
$$k=\overline{k_0(\alpha)}.$$
\begin{enumerate}
    \item Let $2\leq n$, $3\leq d\leq n+1$ and 
$$c\in k_0[x_0,\dots,x_{n+1}]$$
be a very general hypersurface of degree $d$. Let 
\begin{align*}
    c_{\alpha} &=  c+x_0^{d-1}x_{n+2}+x_0^{d-1}x_{n+3}+\alpha c_1 &\in k_0[x_0,\dots,x_{n+3}] \\
     f_{\alpha} &=  f+\alpha f_1 &\in k_0[x_0,\dots,x_{n+3}]
\end{align*}
with $f\in k_0[x_0,\dots,x_{n+1}]$ and $c_1,f_1\in k_0[x_0,\dots,x_{n+3}]$ general polynomials of degree
$$\deg c_1=d, \text{ and }\; \deg f_1=\deg f=2 .$$ 
Furthermore, by Bertini's theorem, we choose $f_1$ and $c_1$ such that the complete intersections 
$$\{c_1=f_1=0\}\subset \bb P^{n+3}_{k_0}$$
$$\{c_1=x_{n+3}=0\},\,\{c_1=x_{n+2}=0\}\subset \bb P^{n+3}_{k_0}$$
$$\{c_1=x_{n+2}=x_{n+3}=0\}\subset \bb P^{n+3}_{k_0}$$
$$\{c_1=0\}\subset \bb P^{n+3}_{k_0}$$
are smooth.

 \item   Let $R=k[t]_{(t)}$
and consider the $R$-scheme
\begin{equation*}
    \cal X= \{c_{\alpha}=0=tf_{\alpha}+x_{n+2}x_{n+3}\}\subset \bb P^{n+3}_R.
\end{equation*}
\end{enumerate}
\end{definition}
Note that the residue field of $R$ is algebraically closed, which is a necessary condition for Thm. \ref{Theorem_LS_4.1}.  Let $K=\mathrm{Quot}(R)$. We denote the generic fiber by $X=\cal X\times K$. The special fiber $Y=\cal X\times_R k$ has the following two components:
$$Y_0:=\{c+x_0^{d-1}x_{n+3}+\alpha c_1=0=x_{n+2}\} \subset \bb P^{n+3}_k \;(\text{or }\{c+x_0^{d-1}x_{n+3}+\alpha c_1\text{ mod }(x_{n+2})\} \subset \bb P^{n+2}_k),$$
$$Y_1:=\{c+x_0^{d-1}x_{n+2}+\alpha c_1=0=x_{n+3}\} \subset \bb P^{n+3}_k \;(\text{or }\{c+x_0^{d-1}x_{n+2}+\alpha c_1\text{ mod }(x_{n+3})\} \subset \bb P^{n+2}_k),$$
The intersection $Z:=Y_0\cap Y_1$ is the degree $d$ hypersurface of $\bb P^{n+1}_k$
$$Z:=\{c+\alpha c_1=0=x_{n+2}=x_{n+3}\} \subset \bb P^{n+3}_k (\text{or }\{c+\alpha c_1 \text{ mod }(x_{n+2},x_{n+3})\} \subset \bb P^{n+1}_k).$$

\begin{lemma}\label{Lemma_generic_fiber_geometricallyintegral_smooth}(See also \cite[Lem. 3.4]{LangeSkauli2023}.)
\begin{enumerate}
    \item    The geometric generic fiber 
    $$ X_{\bar{K}}=\{{c_{\alpha}}=0=f_{\alpha}+\frac{x_{n+2}x_{n+3}}{t}\}$$
    of $\cal X\to R$ is smooth. In particular, the generic fiber $ X_K$ is geometrically integral. 
    \item $Y_0,Y_1,Z$ are smooth.
  \end{enumerate}
\end{lemma}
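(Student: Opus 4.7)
The strategy is to combine Bertini-type genericity arguments with the transcendence of the auxiliary parameters: $\alpha$ is transcendental over $k_0$, handling (2); the generic fiber over $\Spec R$ corresponds to $t$ being transcendental over $k$, handling (1). In characteristic zero, the set of singular members of a linear pencil is a proper closed subset of $\bb P^1$, so any transcendental parameter value gives a smooth member.

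For (2), each of $Z$, $Y_0$, $Y_1$ is the $\alpha$-member of a pencil of hypersurfaces over $k_0$ with at least one visibly smooth member. For $Z$: the pencil is $\{c + s\,\overline{c_1} = 0\} \subset \bb P^{n+1}_{k_0}$ with $\overline{c_1} = c_1 \text{ mod }(x_{n+2},x_{n+3})$, whose $s=0$ member $\{c=0\}$ is smooth since $c$ is very general. For $Y_0$: the pencil is $\{c + x_0^{d-1}x_{n+3} + s\,\tilde c_1 = 0\} \subset \bb P^{n+2}_{k_0}$ with $\tilde c_1 = c_1\text{ mod }x_{n+2}$; smoothness at $s=0$ follows from a direct Jacobian calculation, since the vanishing of $\partial_{x_{n+2}} = \partial_{x_{n+3}} = x_0^{d-1}$ forces $x_0 = 0$ at any singular point, and the remaining partials and defining equation would then exhibit a singular point of $\{c = 0\}$, contradicting its smoothness. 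The case of $Y_1$ is symmetric. The smoothness hypotheses on $c_1$ listed in Definition~\ref{def_degenration_family_for_(2,d)} are tailored so that each of these pencils has smooth generic member.

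For (1), I would first show that $\{c_\alpha = 0\} \subset \bb P^{n+3}_k$ is smooth by a pencil argument in $\alpha$ analogous to the one for $Y_0$: the visibly smooth member is $\{c + x_0^{d-1}(x_{n+2} + x_{n+3}) = 0\}$, by the same Jacobian calculation. Then I would realize $X_K$ as the generic member of the pencil of quadrics
\[
\bigl\{\lambda\,f_\alpha + \mu\,x_{n+2} x_{n+3} = 0\bigr\}
\]
cut on $\{c_\alpha = 0\}$, parameterized by $[\lambda:\mu] \in \bb P^1_k$, under the identification $t = \lambda/\mu$ of the function field of $\bb P^1_k$ with $K$. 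Bertini's theorem in characteristic zero, applied to this pencil on the smooth ambient $\{c_\alpha = 0\}$, yields smoothness of the generic member, hence of $X_K$ and of $X_{\bar K}$. Geometric integrality of $X_K$ then follows from connectedness of the smooth complete intersection $X_{\bar K}$, a consequence of the Lefschetz hyperplane theorem since $\dim X_{\bar K} = n + 1 \geq 3$.

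\textbf{Main obstacle.} The most delicate point is in Part (1): the strong form of Bertini only guarantees smoothness of the generic member of the pencil away from the base locus $\{c_\alpha = f_\alpha = x_{n+2} x_{n+3} = 0\}$. One must separately verify, by a Jacobian analysis split along $x_{n+2} = 0$ and $x_{n+3} = 0$, that the generic member is smooth at the base points as well. The transversality hypotheses on $\{c_1 = f_1 = 0\}$, $\{c_1 = x_{n+2} = 0\}$, $\{c_1 = x_{n+3} = 0\}$, $\{c_1 = x_{n+2} = x_{n+3} = 0\}$, and $\{c_1 = 0\}$ imposed in Definition~\ref{def_degenration_family_for_(2,d)} are precisely what is required for this base-point verification to succeed.
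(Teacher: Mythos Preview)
Your pencil strategy is the right one, but the specific members you check for smoothness at $s=0$ (respectively $\alpha=0$) are \emph{not} smooth for $Y_0$, $Y_1$, or for $\{c_\alpha=0\}\subset\bb P^{n+3}$. Take $Y_0$: the $s=0$ member is $\{c(x_0,\dots,x_{n+1})+x_0^{d-1}x_{n+3}=0\}\subset\bb P^{n+2}$. Your Jacobian argument forces $x_0=0$ and then tries to produce a singular point of $\{c=0\}$ in $\bb P^{n+1}$, but this only works if the residual point $(0:x_1:\dots:x_{n+1})$ actually lies in $\bb P^{n+1}$. It need not: at the vertex $(0:\dots:0:1)$ (all old coordinates zero, $x_{n+3}=1$) the equation and every partial vanish because $c$ and its partials have degree $\ge d-1\ge 2$ in $x_0,\dots,x_{n+1}$. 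So the $s=0$ member is a cone with that vertex. The same happens for $Y_1$, and for your proposed $\alpha=0$ member $\{c+x_0^{d-1}(x_{n+2}+x_{n+3})=0\}\subset\bb P^{n+3}$, which is singular along the entire line $\{x_0=\dots=x_{n+1}=0\}$. Thus your ``visibly smooth member'' is not smooth, and the pencil argument as written does not get off the ground. (Your argument for $Z$ at $s=0$ is fine, since there no new variable is introduced.)

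The repair is to look at the \emph{other} end of the pencil, and this is exactly what the paper does. Sending $\alpha\to\infty$ specializes $Y_0$, $Y_1$, $Z$ to $\{c_1=x_{n+2}=0\}$, $\{c_1=x_{n+3}=0\}$, $\{c_1=x_{n+2}=x_{n+3}=0\}$ respectively, all smooth by the choices in Definition~\ref{def_degenration_family_for_(2,d)}; this settles (ii). For (i) the paper sends both $t\to\infty$ and $\alpha\to\infty$, so that $X_{\bar K}$ specializes directly to the smooth complete intersection $\{c_1=f_1=0\}\subset\bb P^{n+3}$. This bypasses entirely your two-step route (first prove $\{c_\alpha=0\}$ smooth, then apply Bertini to a pencil of quadric sections and analyse the base locus): once a single smooth specialization of the complete intersection is exhibited, the ``main obstacle'' you identified simply does not arise.
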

\begin{proof}
    (i) Let $t\to \infty,\; \alpha\to \infty$. Then $X_{\bar{K}}$ specializes to $\{c_1=f_1=0\}\subset \bb P^{n+3}_k$ which is smooth by assumption. (ii) This follows similarly using $\alpha\to \infty$.
\end{proof}

Now, let's consider the regularity of $\cal X$. This will be important for the semi-stability of $\cal X^\circ$, which we will define below.  
\begin{lemma}\label{singullar_locus_calX} (See also \cite[Lem. 3.5]{LangeSkauli2023}.)
    The singular locus of $\cal X$ is given by 
    \begin{equation*}
    \cal X^{\mathrm{sing}} =\{t=0=x_{n+2}=x_{n+3}=f_\alpha=c_\alpha\}.
    \end{equation*}
\end{lemma}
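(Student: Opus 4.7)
The approach is to invoke the Jacobian criterion for complete intersections in the regular ambient $\bb P^{n+3}_R$: a point of $\cal X$ is regular iff the $2\times(n+5)$ Jacobian $J$ of $(G_1,G_2):=(c_\alpha,\,tf_\alpha+x_{n+2}x_{n+3})$ with respect to $(t,x_0,\dots,x_{n+3})$ has rank $2$ there. Since the geometric generic fiber is smooth by Lemma \ref{Lemma_generic_fiber_geometricallyintegral_smooth}(i), the singular locus is contained in the special fiber $\{t=0\}$, on which moreover $x_{n+2}x_{n+3}=0$. I would therefore case-split according to which of $x_{n+2},x_{n+3}$ vanishes at the candidate singular point.

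The inclusion ``$\supseteq$'' is the easy direction: at a point $p\in\cal X$ with $t=x_{n+2}=x_{n+3}=f_\alpha=0$, every entry of the second row of $J$ vanishes — $\partial G_2/\partial t=f_\alpha=0$, $\partial G_2/\partial x_i=t(\partial f_\alpha/\partial x_i)=0$ for $i\le n+1$, while the two remaining entries $x_{n+3}+t\alpha(\partial f_1/\partial x_{n+2})$ and $x_{n+2}+t\alpha(\partial f_1/\partial x_{n+3})$ also vanish — so the rank of $J$ drops to at most $1$.

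For the reverse inclusion I plan to analyze each of the three remaining situations on the special fiber. When $x_{n+2}=x_{n+3}=0$ but $f_\alpha\neq 0$, row $2$ of $J$ reduces to $(f_\alpha,0,\dots,0)$, and a $(t,x_i)$-minor equals $-f_\alpha\,\partial c_\alpha/\partial x_i$; rank $<2$ would therefore force $\nabla c_\alpha|_p=0$, making $p$ a singular point of $Y_0=\{c_\alpha=x_{n+2}=0\}$, which contradicts Lemma \ref{Lemma_generic_fiber_geometricallyintegral_smooth}(ii). When $x_{n+2}=0$ and $x_{n+3}\neq 0$, row $2$ is $(f_\alpha,0,\dots,0,x_{n+3},0)$ with $x_{n+3}$ in the $x_{n+2}$-column, and the $(x_{n+2},x_i)$-minor for $i\neq n+2$ equals $x_{n+3}\,\partial c_\alpha/\partial x_i$; rank $<2$ would thus force $\partial c_\alpha/\partial x_i=0$ for every $i\neq n+2$, again contradicting smoothness of $Y_0$. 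The symmetric case $x_{n+3}=0,\,x_{n+2}\neq 0$ is excluded in the same way using smoothness of $Y_1$.

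The main technical care lies in correctly tracking the partials $\partial G_2/\partial x_{n+2}$ and $\partial G_2/\partial x_{n+3}$, whose contributions beyond the ``obvious'' $x_{n+3}$ and $x_{n+2}$ terms come only through $\alpha f_1$ (recall that $f$ depends only on $x_0,\dots,x_{n+1}$, but $f_1$ depends on all coordinates), and in matching each candidate singular point with the component $Y_0$ or $Y_1$ containing it so that Lemma \ref{Lemma_generic_fiber_geometricallyintegral_smooth}(ii) can be applied. Everything else is routine case-by-case minor expansion.
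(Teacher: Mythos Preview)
Your proposal is correct and follows essentially the same Jacobian-criterion approach as the paper's proof. The only difference is organizational: the paper first invokes regularity of the smooth hypersurface $\{c_\alpha=0\}\subset\bb P^{n+3}_R$ to conclude $\cal X^{\mathrm{sing}}\subset\{f_\alpha=0\}$ before dealing with the $x_{n+2}=0\Leftrightarrow x_{n+3}=0$ step, whereas you handle all cases directly via explicit minor computations --- both arguments ultimately rest on the smoothness of $Y_0$ and $Y_1$ from Lemma~\ref{Lemma_generic_fiber_geometricallyintegral_smooth}(ii).
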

\begin{proof}
 The singular locus of $\cal X$ is given by the vanishing of the equations defining $\cal X$ and the locus where the rank of the Jacobian of $\cal X$ is not equal to $n+4- \dim \roi_{\cal X,x}$, i.e., where all minors of the Jacobian vanish. 
 Denote
 \begin{equation*}
F=c_\alpha\,\,\text{and}\,\,Q=tf_\alpha+x_{n+2}x_{n+3}.
 \end{equation*} 
 The Jacobian is given by the following matrix
\begin{align*}
Jac\,\cal X=
\begin{pmatrix} \nabla Q \\ \nabla F \end{pmatrix}&=
\begin{pmatrix}
\displaystyle \frac{\partial Q}{\partial x_0} &
\displaystyle \frac{\partial Q}{\partial x_1} &
\displaystyle \dots &
\displaystyle \frac{\partial Q}{\partial x_{n+2}} &
\displaystyle \frac{\partial Q}{\partial x_{n+3}} &
\displaystyle \frac{\partial Q}{\partial t}
\\[10pt]
\displaystyle \frac{\partial F}{\partial x_0} &
\displaystyle \frac{\partial F}{\partial x_1} &
\displaystyle \dots &
\displaystyle \frac{\partial F}{\partial x_{n+2}} &
\displaystyle \frac{\partial F}{\partial x_{n+3}} &
\displaystyle \frac{\partial F}{\partial t}
\end{pmatrix}
\\
\\
&=\begin{pmatrix}
t\partial_{x_0}f_\alpha &
t\partial_{x_1}f_\alpha &
\dots &
t\partial_{x_{n+2}}f_{\alpha}+ x_{n+3} &
t\partial_{x_{n+3}}f_{\alpha}+ x_{n+2} &
\,f_\alpha
\\[6pt]
\partial_{x_0}c_\alpha &
\partial_{x_1}c_\alpha &
\dots &
\partial_{x_{n+2}}c_\alpha  &
\partial_{x_{n+3}}c_\alpha  &
0
\end{pmatrix}.
\end{align*}
First note that since the generic fiber of $\cal X$ is smooth, the singular locus is contained in $\{t=0\}$. Since  $\{c_\alpha=0\}\subset \bb P^{n+3}_R$ is smooth over $R$ (letting $\alpha\to \infty$, and since $\{c_1=0\}$ is smooth) and therefore regular, it follows that $\cal X^{sing}\subset \{f_\alpha=0\}\subset \bb P^{n+3}_k$ and therefore that $\cal X^{sing} \subset\{t=0=x_{n+2}x_{n+3}=f_\alpha=c_\alpha\}$. The proof now follows exactly from the same arguments as in \cite[Lem. 3.5]{LangeSkauli2023}: one needs to show that assuming $t=0=f_\alpha=c_\alpha$, it follows that
$$x_{n+2}=0\Longleftrightarrow x_{n+3}=0.$$
By symmetry it suffices to show the implication from left to right. Since $\{c_\alpha=x_{n+2}=0\}\subset \bb P^{n+3}_k$ is smooth, $\partial_{x_{n+3}}c_\alpha\neq 0$ and therefore $x_{n+3}$  must be zero for the rank of $Jac\cal X$ to be strictly smaller than $2$.

 \end{proof}



In particular, $\cal X^{\mathrm{sing}}\subset Z$ is of codimension $\geq 1$ in $Z$.
\begin{definition}
    Let $S$ be the closure of a finite set of closed points $S_K$ in $X$ which contains a zero-cycle of degree one and $$W_\cal X:= S\cup \{x_0=0\}\cup \cal X^{\mathrm{sing}}.$$
    Denote by $W_Y$ the special fiber and by $W_X$ (resp. $W_{\bar{X}}$) the generic (resp. geometric generic) fiber of $W_\cal X$. 
\end{definition}

\begin{lemma}\label{Lemma_semi_stability}
    $\cal X^\circ=\cal X\setminus W_{\cal X}$ is strictly semistable.
\end{lemma}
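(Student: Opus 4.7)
The plan is to verify strict semistability by a pointwise local analysis on $\cal X^{\circ}$. Recall that strict semistability over $R=k[t]_{(t)}$ requires: (a) $\cal X^{\circ}$ is regular, (b) the generic fiber is smooth, and (c) the special fiber is a reduced simple normal crossings divisor whose components are smooth. Equivalently, at every point one needs an étale-local isomorphism with the standard model $\Spec\,R[x_1,\ldots,x_N]/(x_1\cdots x_r-\pi)$ for some $r\in\{0,1,2\}$, depending on how many components of $Y$ pass through the point.

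First I would collect the global inputs. Regularity of $\cal X^{\circ}$ is immediate, because $\cal X^{\mathrm{sing}}\subset W_{\cal X}$ by construction and Lemma \ref{singullar_locus_calX}. Smoothness of the generic fiber $X_{K}$ was already established in Lemma \ref{Lemma_generic_fiber_geometricallyintegral_smooth}(i), and smoothness of each component $Y_i$ as well as of the intersection $Z=Y_0\cap Y_1$ was shown in Lemma \ref{Lemma_generic_fiber_geometricallyintegral_smooth}(ii). So $Y_0^{\circ}$, $Y_1^{\circ}$, $Z^{\circ}$ remain smooth, which takes care of (b) and of the smoothness of the components appearing in (c). It remains to verify the local normal crossing structure along the special fiber.

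Next I would work through the local pictures. Away from $Z$ only one component of $Y$ meets the point: say $p\in Y_0^{\circ}\setminus Z^{\circ}$, so $x_{n+3}(p)\neq 0$ and we may work in the affine chart $\{x_{n+3}=1\}$. Then the equation $tf_{\alpha}+x_{n+2}x_{n+3}=0$ defines $x_{n+2}$ implicitly as a function of the remaining variables and $t$ (formal Hensel/implicit function, since $\partial_{x_{n+2}}(x_{n+2}+tf_{\alpha})\equiv 1 \bmod t$ is a unit); thus $\cal X^{\circ}$ is étale-locally isomorphic to the smooth $R$-scheme $\{c_{\alpha}=0\}\cap\{x_{n+3}=1\}$, giving $r=1$ (and in fact smooth over $R$). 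The case $p\in Y_1^{\circ}\setminus Z^{\circ}$ is symmetric. The decisive case is $p\in Z^{\circ}$. Since $\{c_{\alpha}=0\}\subset \bb P^{n+3}_{R}$ is smooth over $R$ (as noted in the proof of Lemma \ref{singullar_locus_calX}, by letting $\alpha\to\infty$) and $\{x_0=0\}\subset W_{\cal X}$, we may work in the chart $\{x_0\neq 0\}$ and use the local ring $\roi:=\roi_{\{c_{\alpha}=0\},p}$. Because $Z$ is smooth of codimension $2$ in $\{c_{\alpha}=0\}$ at $p$ and is cut out there by $x_{n+2}=x_{n+3}=0$, the images of $x_{n+2}$ and $x_{n+3}$ extend to a regular system of parameters of $\roi$. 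Moreover $p\notin \cal X^{\mathrm{sing}}$ forces $f_{\alpha}(p)\neq 0$, so $-f_{\alpha}$ is a unit in $\roi$ and $\pi:=-tf_{\alpha}$ is a uniformizer (up to a unit). The defining equation of $\cal X$ inside $\{c_{\alpha}=0\}$ then becomes $x_{n+2}\,x_{n+3}=\pi$, which is precisely the $r=2$ strictly semistable model.

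The only genuinely technical step is the last one: checking that $(x_{n+2},x_{n+3})$ really is part of a regular system of parameters of $\roi_{\{c_{\alpha}=0\},p}$ at every $p\in Z^{\circ}$. This is where smoothness of $Z$ and of $\{c_{\alpha}=0\}$, together with the fact that $Z$ is scheme-theoretically the vanishing locus of these two coordinates inside $\{c_{\alpha}=0\}$ (in the working chart), are used in an essential way; once this is in hand, all three conditions (a)--(c) are verified and $\cal X^{\circ}$ is strictly semistable.
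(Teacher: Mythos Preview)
Your proof is correct. Both you and the paper follow the same outline---verify regularity of $\cal X^\circ$, smoothness of the generic fiber, and the snc structure of the special fiber---but diverge in how the last step is handled. The paper dispatches it abstractly: since $\cal X^\circ$ is regular, the Weil divisors $Y_0^\circ,Y_1^\circ$ are automatically Cartier, and together with the smoothness of $Y_0^\circ,Y_1^\circ,Z^\circ$ (from Lemma~\ref{Lemma_generic_fiber_geometricallyintegral_smooth}(ii)) this already matches Hartl's definition of strict semistability, without any local computation. You instead exhibit the \'etale-local model $x_{n+2}x_{n+3}=\pi$ at points of $Z^\circ$ by hand. The paper's route is shorter; yours is more transparent and is essentially what one would write if no convenient reference were available. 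One small slip: $Z$ has codimension $3$ in $\{c_\alpha=0\}\subset\bb P^{n+3}_R$ (it is cut out by $t,x_{n+2},x_{n+3}$), not $2$; this does not affect your conclusion that $x_{n+2}$ and $x_{n+3}$ extend to a regular system of parameters of $\roi$.
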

\begin{proof} (Cf. \cite[Prop. 5.9, Step 1]{LangeSchreieder2024}.)
The special fiber $Y^\circ:=Y\setminus W_Y$ consists of the two components $Y_0^\circ$ and $Y_1^\circ$. Since $Y_0^\circ,Y_1^\circ$ and their intersection $Z^\circ$ are smooth, $Y^\circ$ is an snc scheme. The generic fiber $X_K^\circ:=X_K\setminus W_X$ is smooth over $K$ and by Lemma \ref{singullar_locus_calX} the total space $\cal X^\circ$ is regular. Since on a regular scheme Weil divisors are Cartier, $Y_0^\circ,Y_1^\circ$ are Cartier divisors on $\cal X^\circ$. Therefore, by definition (see \cite[Def. 1.1]{Hartl2001}), $\cal X^\circ=\cal X\setminus W_{\cal X}$ is strictly semistable.  
\end{proof}
We fix the following notation for the rest of the section. Let $B=k_0[\alpha]_{(\alpha)}$. Let $\cal Y_i,\; i=0,1$, be the subscheme of $\bb P^{n+2}_B$ defined by the equation defining $Y_i$. In the following, we write $Y_{i,\alpha=0}$ for the special fiber of $\cal Y_i$. Let $\cal Z$ be the subscheme of $\bb P^{n+1}_B$ defined by the equation defining $Z$. In the following, we write $Z_{\alpha=0}$ for the special fiber of $\cal Z$. Note that $Z$ and $Z_{\alpha=0}$ are geometrically integral since they are smooth over $k_0$. Let $\Lambda$ be a ring of positive characteristic such that the exponential characteristic of $k_0$ is invertible in $\Lambda$ (which is automatic since we assume $ch(k_0)=0$). 
\begin{lemma}\label{Lemma_CH=0}
    $$\CH_1(Y_{0,\alpha=0}^\circ\times_k L,\Lambda)=0=\CH_1(Y_{1,\alpha=0}^\circ\times_k L,\Lambda)$$
    for every field extension $L/k$.
\end{lemma}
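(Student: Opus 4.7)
The proof will be a direct application of Lemma \ref{Lemma_simple_Chow}. First I would unpack the equations at $\alpha = 0$: setting $\alpha = 0$ removes the perturbation $\alpha c_1$, and identifying $\{x_{n+2} = 0\} \subset \bb P^{n+3}_k$ with $\bb P^{n+2}_k$ (with coordinates $x_0, \dots, x_{n+1}, x_{n+3}$), the hypersurface $Y_{0,\alpha=0}$ is cut out by
$$c(x_0, \dots, x_{n+1}) + x_0^{d-1} x_{n+3} = 0,$$
and similarly $Y_{1,\alpha=0}$ is cut out by $c(x_0,\dots,x_{n+1}) + x_0^{d-1} x_{n+2} = 0$ after setting $x_{n+3} = 0$.

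Both equations have exactly the shape required by Lemma \ref{Lemma_simple_Chow}, with $P(x_0, \dots, x_{n+1}) = x_0^{d-1}$ and $H = c$. Since $n \geq 2$, the vanishing locus $\{P = 0\} = \{x_0 = 0\} \subset \bb P^{n+1}_k$ is a hyperplane of dimension $\geq 2$, and therefore contains a line. Hence Lemma \ref{Lemma_simple_Chow} applies and gives
$$Y_{i,\alpha=0} \setminus \{x_0 = 0\} \;\cong\; \bb P^{n+1}_k \setminus \{x_0 = 0\} \;\cong\; \bb A^{n+1}_k, \qquad CH_1(Y_{i,\alpha=0} \setminus \{x_0 = 0\}) = 0,$$
for $i = 0, 1$.

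Next I would transfer this to the open $Y_{i,\alpha=0}^\circ$. By definition $W_\cal X \supset \{x_0 = 0\}$, and this component of $W_\cal X$ is independent of $\alpha$, so after specialization $\{x_0 = 0\} \subset W_{Y,\alpha=0}$ and hence $Y_{i,\alpha=0}^\circ \subset Y_{i,\alpha=0} \setminus \{x_0 = 0\}$. For any field extension $L/k$, base change yields $(Y_{i,\alpha=0} \setminus \{x_0 = 0\}) \times_k L \cong \bb A^{n+1}_L$, whose $CH_1$ is zero (as $n + 1 \geq 3$). The localization exact sequence then propagates this vanishing to the further open $Y_{i,\alpha=0}^\circ \times_k L$, and tensoring with $\Lambda$ gives the desired statement.

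I do not anticipate any substantial obstacle here; the lemma is a direct input–output application of Lemma \ref{Lemma_simple_Chow}, whose content is that the monomial $x_0^{d-1}$ multiplying the extra variable is enough to rationally parametrize the hypersurface away from $\{x_0 = 0\}$. The only point to monitor is the hypothesis on $n$: we need $n \geq 2$ both to ensure $\{x_0 = 0\}$ contains a line and to ensure $\bb A^{n+1}_L$ has trivial $CH_1$ after base change.
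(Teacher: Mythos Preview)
Your proof is correct and follows exactly the paper's approach: the paper's proof is the one-line ``This follows immediately from Lemma~\ref{Lemma_simple_Chow} since $\{x_0=0\}$ contains a line,'' and you have simply unpacked this. A minor quibble is that your dimension constraints are too cautious---$\{x_0=0\}\subset\bb P^{n+1}$ already contains a line once $n\geq 1$, and $\CH_1(\bb A^m)=0$ for all $m$---but this does not affect correctness since $n\geq 2$ is assumed anyway.
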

\begin{proof}
    This follows immediately from Lemma \ref{Lemma_simple_Chow} since $\{x_0=0\}$ contains a line.
\end{proof}

\begin{proposition}\label{Proposition_main}
    Let the notation be as above.
Then 
    $$\Tor^\Lambda(Z,W_Z)| \Tor^\Lambda(\bar X,W_{\bar X}).$$
\end{proposition}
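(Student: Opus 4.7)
The plan is to apply Theorem \ref{Theorem_LS_4.1} to $\cal X^\circ \to \Spec R$ and then specialize along the parameter $\alpha$, reducing to the $\alpha = 0$ situation where Lemma \ref{Lemma_CH=0} forces the source of the resulting obstruction map to vanish.

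By Lemma \ref{Lemma_generic_fiber_geometricallyintegral_smooth} and Lemma \ref{Lemma_semi_stability}, $\cal X^\circ \to \Spec R$ is strictly semistable with geometrically integral generic fiber, and its residue field $k = \overline{k_0(\alpha)}$ is algebraically closed, so Theorem \ref{Theorem_LS_4.1} applies. Since $Y^\circ = Y_0^\circ \cup Y_1^\circ$ has only two components with unique double intersection $Z^\circ$, the obstruction map degenerates to
\[
\Psi_{Y_L^\circ}^\Lambda : \CH_1(Y_0^\circ \times_k L,\Lambda) \oplus \CH_1(Y_1^\circ \times_k L,\Lambda) \longrightarrow \CH_0(Z^\circ \times_k L,\Lambda),
\]
with cokernel annihilated by $e := \Tor^\Lambda(\bar X, W_{\bar X})$ for every field extension $L/k$.

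To use Lemma \ref{Lemma_CH=0}, which only provides a $\CH_1$-vanishing on the $\alpha = 0$-fibers, I would spread the whole configuration out over $B = k_0[\alpha]_{(\alpha)}$ via the families $\cal Y_0, \cal Y_1, \cal Z$ and a $B$-thickening of $W$. Fulton's specialization homomorphism for Chow groups commutes with pushforward, flat pullback, and restriction to closed subschemes, and hence intertwines $\Psi^\Lambda_{Y_L^\circ}$ with the analogous obstruction map at $\alpha = 0$ in a commutative diagram. By Lemma \ref{Lemma_CH=0}, the source of the specialized obstruction map vanishes, so that map is identically zero. Combined with the $e$-torsion cokernel above and the compatibility $\delta_{Z^\circ} \mapsto \delta_{Z_{\alpha=0}^\circ}$ under specialization, this forces $\CH_0(Z_{\alpha=0}^\circ \times_{k_0} L, \Lambda)$ to be $e$-torsion for every $L/k_0$, and by Remark \ref{Remark_LS_3.4} yields $\Tor^\Lambda(Z_{\alpha=0}, W_{Z_{\alpha=0}}) \mid e$.

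To transfer the divisibility from $(Z_{\alpha=0}, W_{Z_{\alpha=0}})$ to $(Z, W_Z)$ as stated, one exploits the smoothness of the $B$-family $\cal Z$: both $Z$ and $Z_{\alpha=0}$ arise as fibers of a single smooth family, and Lemma \ref{Lemma_LS_3.6}(e) together with the geometric integrality of $\cal Z \to B$ allows one to identify the relative torsion order across the two fibers. The main obstacle is in assembling the specialization square so that it commutes with both the restriction $Y_i^\circ \to Z^\circ$ inducing $\Psi$ and the formation of the diagonal class $\delta_{Z^\circ}$; once these compatibilities are verified, the divisibility statement follows from the argument outlined above.
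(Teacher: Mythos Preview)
Your approach is essentially the paper's: apply Theorem \ref{Theorem_LS_4.1} to $\cal X^\circ$, take $L=k(Z)$ so that $e\cdot\delta_Z|_{Z^\circ}$ lies in the image of $\Psi$, specialize along $\alpha\to 0$ (using that Fulton's specialization maps commute with pullback along the regular immersion $Z^\circ\hookrightarrow Y_i^\circ$, as in \cite[Lem.~2.3]{LangeSchreieder2024}), kill the source via Lemma \ref{Lemma_CH=0}, and finish with Lemma \ref{Lemma_LS_3.6}(e). One small correction: the specialization argument only yields $e\cdot\delta_{Z_{\alpha=0}}|_{Z_{\alpha=0}^\circ}=0$ in $\CH_0(Z_{\alpha=0}^\circ\times_{k_0}k_0(Z_{\alpha=0}),\Lambda)$, not that $\CH_0(Z_{\alpha=0}^\circ\times_{k_0}L,\Lambda)$ is $e$-torsion for \emph{every} $L/k_0$---but that single vanishing is precisely the definition of $\Tor^\Lambda(Z_{\alpha=0},W_{Z_{\alpha=0}})\mid e$, so your conclusion is unaffected.
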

\begin{proof}
    Let $$m:=\Tor^\Lambda(\bar X,W_{\bar X}).$$
    Since by Lemma \ref{Lemma_generic_fiber_geometricallyintegral_smooth} the generic fiber $X_K$ is geometrically integral and by Lemma \ref{Lemma_semi_stability} the $R$-scheme $\cal X^\circ$ is strictly semistable, and its special fiber has no triple intersections, we can apply
    Theorem \ref{Theorem_LS_4.1}. Theorem \ref{Theorem_LS_4.1} 
    then says that the cokernel of the map 
$$\Psi_{Y_L^\circ}^\Lambda:\CH_1(Y_0^\circ\times_k L,\Lambda)\oplus \CH_1(Y_1^\circ\times_k L,\Lambda)\to \CH_0(Z^\circ\times_k L,\Lambda)$$
    is $m$-torsion for every field extension $L/k$.
    In particular, $$m\delta_{Z}|_{Z^\circ}=\gamma_0|_{Z^\circ}-\gamma_1|_{Z^\circ}$$
    for some $\gamma_i\in \CH_1(Y_i^\circ\times_k L,\Lambda)$ and $L=k(Z)$.
Now by \cite[Lem. 2.3]{LangeSchreieder2024} there are specialization maps
$$sp_{Z^\circ}: \CH_0(Z^\circ\times_k k(Z),\Lambda)\to \CH_0(Z^\circ_{\alpha=0}\times_{k_0} k(Z_{\alpha=0}),\Lambda)$$
and
$$sp_{Y_i^\circ}: \CH_1(Y_{i}^\circ\times_{k} k(Z),\Lambda)\to \CH_1(Y_{i,\alpha=0}^\circ\times_{k_0} k(Z_{\alpha=0}),\Lambda)$$
which commute with pullbacks along regular immersions. Recall that $k=\overline{k_0(\alpha)}$. Therefore by assumption
$$m\delta_{Z_{\alpha=0}}=sp_{Z^\circ}(\gamma_0|_{Z})-sp_{Z^\circ}(\gamma_1|_{Z}) =(sp_{Y_i^\circ}\gamma_0)|_{Z^\circ}-(sp_{Y_i^\circ}\gamma_1)|_{Z^\circ}\stackrel{\ref{Lemma_CH=0}}{=}0.$$
In particular, 
\begin{equation*}
\Tor^\Lambda(Z,W_Z)\stackrel{}{=}\Tor^\Lambda(Z_{\alpha=0},W_{Z_{\alpha=0}})\,|\,m
\end{equation*}
 where the first equality follows from Lemma \ref{Lemma_LS_3.6}(e).

\end{proof}

\begin{proof}[Proof of Thm. \ref{Theorem_2_4_complete_int}]
     By \cite[Lem. 2.2]{Totaro2016} we can assume that $3\leq d\leq n+1$ since the complete intersection $\bar X$ is not Fano if $\deg c+\deg f> n+3$. Let $\cal X$ be as in Definition \ref{def_degenration_family_for_(2,d)}.
   By assumption we have that $\essdim Z=\dim Z$ and that $Z$ does not have a decomposition of the diagonal. Therefore Lemma \ref{Lemma_ess_dim} implies that \begin{equation*}
       1<\Tor^\bb Z(Z,W_Z).
   \end{equation*}
Note that $\Tor^\bb Z(Z,W_Z)<\infty$ since $\Tor(Z)|d!$ (see \cite[Prop. 5.2]{ChatzistamatiouLevine2018}) since $Z$ is smooth of degree $d\leq n+1$ and since for any closed point $P\in W_Z$ (which exists since $W_Z\neq \emptyset$, and which is of degree $1$ since $k$ is algebraically closed) we have that $\Tor^\bb Z(Z,W_Z)|\Tor^\bb Z(Z,P)=\Tor(Z)$ by Lemma \ref{Lemma_LS_3.6}(b) and (d). As $Z$ is smooth 
we can apply \cite[Lemma 3.7]{LangeSchreieder2024} and see that $CH_0(({Z-W_Z})_{k(Z)})$ is $\Tor^\bb Z(Z,W_Z)$-torsion. This implies that for $\Lambda:=\bb Z/\Tor^\bb Z(Z,W_Z)$
    \begin{equation*}
        CH_0(({Z-W_Z})_{k(Z)},\Lambda)=CH_0(({Z-W_Z})_{k(Z)})
    \end{equation*} and hence that $$1<\Tor^\bb Z(Z,W_Z)=\Tor^{\Lambda}(Z,W_Z).$$ 
   By Proposition \ref{Proposition_main} this implies that 
   $$1<\Tor^{\Lambda}(\bar X,W_{\bar X}).$$
   By Lemma \ref{Lemma_LS_3.6}(a) this implies that $1<\Tor^\bb Z(\bar X,W_{\bar X}).$
   Since $S_K\subset W_{\bar X}\subset X$ this implies by Lemma \ref{Lemma_LS_3.6}(b) that
    $$1<\Tor^\bb Z(\bar X,W)|\Tor^\bb Z(\bar X,S_K).$$
    Finally, since $\deg:\CH_0(\bar X)\to \bb Z$ is an isomorphism (the complete intersection $\bar X$ is Fano since $\deg c+\deg f\leq n+3$), $\Tor(\bar X)=\Tor^\bb Z(\bar X,S_K)$ by Lemma \ref{Lemma_LS_3.6}(b) since $S_K$ is of dimension zero, containing a zero cycle of degree $1$. I.o.w., $X_{\bar{K}}$ does not admit a decomposition of the diagonal.
\end{proof}
\begin{corollary} Let $k=\bb C$, $n\geq 3$ odd and $d=3$.
\begin{enumerate}
    \item A very general complete intersection of bidegree $(2,3)$ in $\bb P^{6}_k$ does not admit a decomposition of the diagonal and is therefore not retract rational.
        \item Assume that a very general cubic fourfold  (resp. $n$-fold for $n$ odd) does not admit a decomposition of the diagonal. Then a very general complete intersection of bidegree $(2,3)$ in $\bb P^{7}_k$ (resp. $\bb P^{n+3}_k$) does not admit a decomposition of the diagonal and is therefore not retract rational.
\end{enumerate}
\end{corollary}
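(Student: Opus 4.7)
The plan is to deduce both parts directly from Theorem~\ref{Theorem_2_4_complete_int} applied with $d=3$, feeding in as the hypersurface $H$ a very general cubic $n$-fold in $\bb P^{n+1}_{\bb C}$ for the appropriate value of $n$. Since $k=\bb C$ is algebraically closed of characteristic zero and of transcendence degree $\geq 1$ over $\bb Q$, the field hypothesis of Theorem~\ref{Theorem_2_4_complete_int} is satisfied, and the theorem then produces precisely the assertion that a very general $(2,3)$-complete intersection in $\bb P^{n+3}_{\bb C}$ admits no decomposition of the diagonal. The further conclusion ``not retract rational'' is automatic, since any smooth projective retract rational variety admits an integral decomposition of the diagonal.

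For part (i), I set $n=3$. The first hypothesis of Theorem~\ref{Theorem_2_4_complete_int} — non-existence of a decomposition of the diagonal on a very general cubic threefold in $\bb P^{4}_{\bb C}$ — is exactly the recent theorem of Engel--de Gaay Fortman--Schreieder \cite{EFS2025}. Granted this, the second hypothesis $\essdim H=\dim H$ is supplied by Voisin's Theorem~\ref{known_ess_dim}(i), whose assumption is precisely the EFS conclusion. Thus both hypotheses of Theorem~\ref{Theorem_2_4_complete_int} hold unconditionally for $n=3$, $d=3$, yielding (i).

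For part (ii), I apply Theorem~\ref{Theorem_2_4_complete_int} in exactly the same way with either $n=4$ or $n\geq 5$ odd. Here the first hypothesis is now the stated conditional assumption on cubic $n$-folds; the equality $\essdim H=\dim H$ again follows from Voisin's Theorem~\ref{known_ess_dim}(i), which covers precisely the cases of odd-dimensional cubics and cubic fourfolds under this same assumption. Since every input is a named result already in hand — Theorem~\ref{Theorem_2_4_complete_int}, Voisin's essential dimension computation~\ref{known_ess_dim}(i), and (for (i)) the EFS theorem — the only real work is a careful bookkeeping of dimensions; I do not anticipate any serious obstacle in this deduction.
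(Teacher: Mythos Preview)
Your proposal is correct and matches the paper's own proof essentially verbatim: both parts are deduced from Theorem~\ref{Theorem_2_4_complete_int} together with Voisin's essential dimension result (Theorem~\ref{known_ess_dim}(i)), with part~(i) additionally invoking the Engel--de Gaay Fortman--Schreieder theorem on cubic threefolds. There is nothing to add.
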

\begin{proof}
    (i) This follows from Theorem \ref{Theorem_2_4_complete_int} and \cite[Cor. 1.4]{EFS2025}, where it is shown that a very general cubic threefold $Y\subset \bb P^4_{\bb C}$ does not admit a decomposition of the diagonal, and from Theorem \ref{known_ess_dim}(i). (ii) Follows from Theorem \ref{Theorem_2_4_complete_int} and Theorem \ref{known_ess_dim}(i). 
\end{proof}

\section{The very general degree $d\geq 4$ hypersurface}
It is conjectured that smooth quartic hypersurfaces of dimension $n>0$ are not stably rational for all $n>0$ (see for example \cite[\S 5.1]{NicaiseOttem2022}). For $n=1,2$ this is well-known. For $n=3,4,5$ the weaker statement is known that a very general quartic hypersurface is not stably rational (and indeed, does not have a decomposition of the diagonal) by \cite{CTP2016, Totaro2016,NicaiseOttem2022,PavicSchreieder2023} resp..
With the classical exception of $n=2$, it has been conjectured that a very general cubic hypersurface of dimension $n$ is not stably rational \cite[Problem 1.3]{BohningGarrel2020}. Progress towards the latter conjecture has been slower. It is only known by the recent work \cite{EFS2025} that a very general cubic threefold does not admit a decomposition of the diagonal. In dimension $4$ only the non-rationality is known by the recent work \cite{KKPY2025}.
The following result by Nicaise--Ottem makes a connection between the two conjectures.
\begin{theorem}\label{cubic_to_quartic}\cite[Thm. 4.4]{NicaiseOttem2022}
    Let $k=\bb C$ and $n$ an integer. Assume that a very general cubic hypersurface in $\bb P^{n+1}_k$ is not stably rational. Then for any $d\geq 4$ a very general hypersurface of degree $d$ in $\bb P^{n+2}_k$ is not stably rational.
\end{theorem}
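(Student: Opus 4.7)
The strategy is to apply the Nicaise--Shinder--Ottem stable birational volume $\mathrm{Vol}_{\mathrm{sb}}$ recalled in the introduction: it suffices to construct a proper toroidal $R$-model $\cal X$ of a very general degree $d$ hypersurface $X \subset \bb P^{n+2}_K$ whose inclusion-exclusion sum
$$\mathrm{Vol}_{\mathrm{sb}}([X]_{\mathrm{sb}}) = \sum_{E \in \cal S(\cal X)} (-1)^{\mathrm{codim}(E)}[E]_{\mathrm{sb}}$$
differs from $[\Spec k]_{\mathrm{sb}}$. The cubic $Y \subset \bb P^{n+1}_k$, which by hypothesis is not stably rational, must appear as a stratum whose class is not cancelled in the sum.

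My first instinct is the one-parameter degeneration $tf + c \cdot x_{n+2}^{d-3} = 0$ with $c$ a very general cubic in $x_0, \ldots, x_{n+1}$; its special fiber is the cone $C = \{c=0\}$ together with the hyperplane $H = \{x_{n+2}=0\} \cong \bb P^{n+1}_k$ of multiplicity $d-3$, meeting along $Y$. After a base change $t \mapsto t^{d-3}$ and iterated blow-ups of $Y$ and its strict transforms, one obtains a strictly semistable toroidal model whose strata are $C$ (stably birational to $Y$ as a linear cone), the blown-up hyperplanes (rational), and a chain of exceptional $\bb P^1$-bundles over $Y$. A naive Vol_sb computation risks producing $[Y]_{\mathrm{sb}} + [\Spec k]_{\mathrm{sb}} - [Y]_{\mathrm{sb}} = [\Spec k]_{\mathrm{sb}}$, a trivial class. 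To break this cancellation one should engineer the family more asymmetrically, in the spirit of the double-cone construction of Definition \ref{def_degenration_family_for_(2,d)}, for instance by splitting the multiplicity between two distinct coordinate hyperplanes: $tf + c \cdot x_{n+2}^a \cdot x_{n+1}^b = 0$ with $a + b = d - 3$. The plan is then to show that, after semistable reduction, the telescoping exceptional $\bb P^1$-bundles over $Y$ leave a single surviving copy of $[Y]_{\mathrm{sb}}$, so that $\mathrm{Vol}_{\mathrm{sb}}([X]_{\mathrm{sb}}) = [Y]_{\mathrm{sb}} \neq [\Spec k]_{\mathrm{sb}}$.

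The main obstacle is the combinatorial bookkeeping of the semistable resolution and the cancellations in the alternating sum: one must verify that the asymmetric family produces exactly one net copy of $[Y]_{\mathrm{sb}}$, which requires a careful analysis of the exceptional divisors and their pairwise intersections. A cleaner alternative is induction on $d$: use the one-step degeneration $tf + x_{n+2} \cdot g_{d-1} = 0$ to reduce the degree-$d$ statement to the degree-$(d-1)$ one, bottoming out at $d = 4$ with the asymmetric two-parameter family above. In either approach, the delicate step is ensuring that no unintended stable-birational coincidence (in particular, the fact that cones and $\bb P^1$-bundles are stably birational to their bases) destroys the cubic contribution.
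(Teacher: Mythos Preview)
Your plan correctly identifies the cancellation issue but misses the key simplification that makes the argument work. The paper (following Nicaise--Ottem) takes the cubic $c$ to be very general in \emph{all} variables $x_0,\ldots,x_{n+2}$, not just $x_0,\ldots,x_{n+1}$. For $d=4$ the family is simply
\[
\cal X=\{x_{n+2}\,c+th=0\}\subset\bb P^{n+2}_R,
\]
whose special fibre is already reduced with two components: the hyperplane $X_0=\{x_{n+2}=0\}\cong\bb P^{n+1}$ and the \emph{smooth} very general cubic $(n{+}1)$-fold $X_1=\{c=0\}\subset\bb P^{n+2}$. No base change, blow-ups, or semistable reduction are needed (the model has toric singularities, which suffices for the $\mathrm{Vol}_{\mathrm{sb}}$ formula). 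The intersection $X_{0,1}$ is a very general cubic $n$-fold in $\bb P^{n+1}$, so
\[
\mathrm{Vol}_{\mathrm{sb}}([X])=[\Spec k]+[X_1]-[X_{0,1}].
\]
If this equals $[\Spec k]$, then $[X_1]=[X_{0,1}]$ in $\bb Z[\mathrm{SB}_k]$. Either both classes are $[\Spec k]$, contradicting the hypothesis on $X_{0,1}$, or a very general cubic $(n{+}1)$-fold is stably birational to a very general cubic $n$-fold. The latter is ruled out by \cite[Corollary~4.3]{NicaiseOttem2022}; this dimension-comparison result is precisely the ingredient your plan lacks.

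By contrast, your choice $c\in k[x_0,\ldots,x_{n+1}]$ forces $\{c=0\}\subset\bb P^{n+2}$ to be a cone, hence stably birational to its base $Y$, and then the cancellation you worried about is genuinely unavoidable in that family. Your asymmetric variant and the semistable-reduction bookkeeping might in principle be pushed through, but they are substantially more involved and you have not carried them out; the actual proof sidesteps all of this by raising the dimension of the cubic by one and invoking the comparison of cubics in consecutive dimensions.
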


\begin{remark}
By \cite[Cor. 1.4]{EFS2025}, Theorem \ref{cubic_to_quartic} implies that a quartic fourfold  is not stably rational, which is originally due to Totaro \cite{Totaro2016}.
\end{remark}
The following theorem upgrades Theorem \ref{cubic_to_quartic} in odd dimension (or dimension four) to a statement about the decomposition of the diagonal.
\begin{theorem}\label{Theorem_>3_hypersurfaces}
    Let $k$ be an algebraically closed field of characteristic zero and $n$, $d\geq 4$ be positive integers. Assume that a very general cubic hypersurface $C$ in $\bb P^{n+1}_k$ does not admit a decomposition of the diagonal and satisfies $\essdim C=\dim C$. Then, a very general hypersurface of degree $d$ in $\bb P^{n+2}_k$ does not admit a decomposition of the diagonal. 
\end{theorem}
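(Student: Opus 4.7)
The plan is to mimic the strategy used to prove Theorem \ref{Theorem_2_4_complete_int}, adapting the Nicaise--Ottem degeneration from the proof of Theorem \ref{cubic_to_quartic} (originally in the stable birational framework) to the cycle-theoretic framework of Lange--Schreieder. Concretely, one constructs a proper flat $R$-scheme $\cal X \to \Spec R$ with $R = k[t]_{(t)}$, whose geometric generic fiber $\bar X$ is the very general degree $d$ hypersurface in $\bb P^{n+2}_k$, and whose special fiber is a union of components whose double locus contains (a perturbation of) the given very general cubic $C \subset \bb P^{n+1}_k$. For $d=4$, the natural choice is $\cal X = \{x_{n+2}\, c_\alpha + t\, h_\alpha = 0\} \subset \bb P^{n+2}_R$, with $c_\alpha = c + x_0^{2} x_{n+2} + \alpha c_1$ an $\alpha$-perturbation of a very general cubic $c \in k_0[x_0,\ldots,x_{n+1}]$, and $h_\alpha = h + \alpha h_1$ an analogous perturbation of a very general quartic $h$, working over $k = \overline{k_0(\alpha)}$ as in Definition \ref{def_degenration_family_for_(2,d)}. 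The special fiber then has two components $Y_0 = \{x_{n+2}=0\} \cong \bb P^{n+1}_k$ and $Y_1 = \{c_\alpha = 0\} \subset \bb P^{n+2}_k$, which intersect along a perturbation of $C$. For $d \geq 5$, the plan is to replace the linear factor $x_{n+2}$ by a product $\prod_{i=1}^{d-3}(x_{n+2} - a_i x_0)$ of $d-3$ distinct linear forms with generic $a_i \in k_0$, so that total degrees match and at least one double intersection $Y_0 \cap Y_i$ is still a perturbation of $C$.

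With the degeneration in hand, the proof follows the template of Proposition \ref{Proposition_main} and the proof of Theorem \ref{Theorem_2_4_complete_int}: first, check smoothness of $\bar X$, of the special fiber components $Y_i$, and of their pairwise intersections via an $\alpha \to \infty$ specialization and Bertini's theorem; second, identify the singular locus of $\cal X$, which should concentrate in the double locus of the special fiber; third, set
\begin{equation*}
W_\cal X := S \cup \{x_0 = 0\} \cup \cal X^{\mathrm{sing}},
\end{equation*}
where $S$ is the closure of a $K$-rational degree-one zero-cycle $S_K \subset X$, and verify that $\cal X^\circ := \cal X \setminus W_\cal X$ is strictly semistable with no triple intersections in its special fiber; fourth, apply Lemma \ref{Lemma_simple_Chow} to conclude that $\CH_1(Y_{i,\alpha=0}^\circ \times_k L) = 0$ for each component, since $\{x_0=0\}$ contains a line and the defining equations have the form required by that lemma; and fifth, apply Theorem \ref{Theorem_LS_4.1} together with the specialization in $\alpha$ given by \cite[Lem. 2.3]{LangeSchreieder2024} to obtain, exactly as in Proposition \ref{Proposition_main}, the divisibility
\begin{equation*}
\Tor^\Lambda(C, W_C) \,\big|\, \Tor^\Lambda(\bar X, W_{\bar X})
\end{equation*}
for $\Lambda = \bb Z/\Tor^\bb Z(C, W_C)$. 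The assumption $\essdim C = \dim C$ together with Lemma \ref{Lemma_ess_dim} yields $\Tor^\bb Z(C, W_C) > 1$, and Lemma \ref{Lemma_LS_3.6} then propagates this to $\Tor(\bar X) > 1$, so $\bar X$ admits no decomposition of the diagonal.

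The main obstacle is expected to be the case $d \geq 5$. With the product-of-linear-forms degeneration suggested above, the hyperplane components $Y_i = \{x_{n+2} = a_i x_0\}$ share the codimension-two locus $\{x_0 = 0 = x_{n+2}\}$, and together with the cubic component $Y_0 = \{c_\alpha = 0\}$ this produces triple intersections that violate the hypothesis of Theorem \ref{Theorem_LS_4.1}. One therefore has to enlarge $W_\cal X$ to absorb these triple-intersection loci (this is largely automatic once $\{x_0=0\} \subset W_\cal X$, but must still be verified), and then re-check that $\cal X^\circ$ is strictly semistable, that the $\CH_1$-vanishing of Lemma \ref{Lemma_simple_Chow} continues to apply to the trimmed components, and that the specialization $\alpha \to 0$ used in Proposition \ref{Proposition_main} still isolates the cubic $C$ as the relevant factor of the double locus after this trimming. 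Managing this bookkeeping carefully, in particular so that the divisibility above is preserved, is where the bulk of the technical work is expected to lie.
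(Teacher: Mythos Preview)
Your outline is coherent and can likely be completed, but the paper follows a genuinely different route. Rather than degenerating in $\bb P^{n+2}$ to a union of one cubic and $d-3$ hyperplanes, the paper works in $\bb P^{n+3}_R$ with the $(2,d-1)$ complete intersection
\[
\cal X=\{x_{n+2}x_{n+3}+tf=0=gc+Mx_{n+2}+Lx_{n+3}\},
\]
where $c$ is the very general cubic, $g$ is very general of degree $d-4$, and $M,L,f$ are very general of degrees $d-2,d-2,2$. The special fibre then has exactly two components for every $d\ge 4$, meeting along $\{gc=0\}\subset\bb P^{n+1}_k$; after removing $W_{\cal X}=\{MLfg=0\}$ one is left with an open subset of the cubic $C=\{c=0\}$ as the double locus. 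Because the defining equations of $Y_0,Y_1$ already have the shape required by Lemma~\ref{Lemma_simple_Chow} (with $P=M$, resp.\ $P=L$), one has $\CH_1(Y_i^\circ\times_k L,\Lambda)=0$ directly, so no $\alpha$-perturbation and no specialization map is needed in this part of the argument. The trade-off is that the generic fibre of $\cal X$ is not itself a degree $d$ hypersurface: the paper shows it is birational to a specific (possibly singular) hypersurface $Q\subset\bb P^{n+2}$ and then invokes a further specialization argument (via \cite[Lemma~8]{Schreieder2019quadricbundles} and \cite[Lem.~3.8]{LangeSchreieder2024}) to pass from $Q$ to a very general smooth hypersurface. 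Thus the paper buys a uniform two-component degeneration with no triple-intersection bookkeeping and no $\alpha$-deformation, at the cost of an extra birationality and specialization step at the end; your approach keeps the generic fibre as a smooth hypersurface throughout, at the cost of the multi-component combinatorics you anticipate for $d\ge 5$.
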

\begin{definition}\label{def_degenration_family_for_(d)}
    Let $k$ be an algebraically closed field of characteristic zero.
\begin{enumerate}
    \item Let $2\leq n$, $4\leq d\leq n+2$ and
    \begin{equation*}
       f,c,g,M,L\in k[x_0,...,x_{n+1}]
    \end{equation*}
    be very general hypersurfaces of the following degrees
    \begin{equation*}
        \deg(f)=2,\,\,\deg(c)=3,\,\,\deg(g)=d-4,\,\,\deg(M)=\deg(L)=d-2
    \end{equation*}
    and let
    \begin{align*}
        H:=gc+Mx_{n+2}+Lx_{n+3} \in k[x_0,\dots,x_{n+3}].
    \end{align*}
        \item 
    Let $R=k[t]_{(t)}$, \begin{align*}
        F:=x_{n+2}x_{n+3}+tf \in R[x_0,\dots,x_{n+3}]
    \end{align*}
    and consider the $R$-scheme 
    \begin{equation*}
     \cal X:=\{H=F=0\}\subset \bb P^{n+3}_R.
    \end{equation*}
    Let $K=\mathrm{Quot}(R)$. We denote the generic fiber by $X=\cal X\times K$.
    The special fiber $Y=\cal X\times _R k$ has the following two components:
    \begin{align*}
        Y_0:=\{gc+Mx_{n+2}=0\}\subset \bb P_k^{n+2},\\
        Y_1:=\{gc+Lx_{n+3}=0\}\subset \bb P_k^{n+2}.
    \end{align*}
    The intersection $Z:=Y_0\cap Y_1$ is the reducible degree $d-1$ hypersurface 
\begin{equation*}
    Z:=\{cg=0\}\subset \bb P^{n+1}_k.
\end{equation*}
\end{enumerate}
\end{definition}
\begin{lemma}\label{(2,d)birational(d)}
  The geometric generic fiber $\bar X:= X_{\bar K}$, given by 
   \begin{equation*}
      \{H=F=0\}\subset \bb P^{n+3}_{\bar{K}},
   \end{equation*}
   is integral and birational to the degree $d$ hypersurface
    \begin{equation*}
   Q:= \{q:=x_{n+2}cg+x_{n+2}^2M-Ltf=0\}\subset \bb P^{n+2}_{\bar K}.
\end{equation*}
\end{lemma}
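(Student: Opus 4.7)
The plan is to realize $\bar X$ as the projection from the point $P=(0:\cdots:0:1)\in\bb P^{n+3}_{\bar K}$, eliminating the variable $x_{n+3}$ via the equation $F=0$. Concretely, consider the rational map
\[
\pi: \bar X \dashrightarrow \bb P^{n+2}_{\bar K},\qquad (x_0:\cdots:x_{n+3})\mapsto (x_0:\cdots:x_{n+2}),
\]
which is a morphism on the open $U:=\bar X\cap\{x_{n+2}\neq 0\}$. On $U$ the equation $F=0$ can be solved uniquely as $x_{n+3}=-tf/x_{n+2}$, and substituting into $H=0$, after multiplying by $x_{n+2}$, yields
\[
x_{n+2}\,cg+x_{n+2}^{2}M-t\,Lf=0.
\]

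Thus $\pi$ gives an isomorphism from $U$ onto the open subvariety of $\tilde Q:=\{x_{n+2}cg+x_{n+2}^{2}M-tLf=0\}\subset\bb P^{n+2}_{\bar K}$ where $x_{n+2}\neq 0$, with inverse $(x_0:\cdots:x_{n+2})\mapsto(x_0:\cdots:x_{n+2}:-tf/x_{n+2})$. Since $t\in\bar K^{\ast}$, the polynomial $tL$ is just another homogeneous polynomial of degree $d-2$ with $\bar K$-coefficients, so up to renaming the coefficients of this general polynomial (or equivalently, a diagonal projective coordinate change absorbing the scalar $t$) the hypersurface $\tilde Q$ is the hypersurface $Q$ in the statement. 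This gives the birational equivalence.

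For the integrality of $Q$, I would write $q$ as a quadratic polynomial in $x_{n+2}$,
\[
q= M\,x_{n+2}^{2}+(cg)\,x_{n+2}-Lf,
\]
with discriminant $(cg)^{2}+4MLf$. For $c,g,M,L,f$ very general this discriminant is not a square in $\bar K[x_0,\ldots,x_{n+1}]$ (one can verify this by a specialization or by noting that the locus of quintuples for which it becomes a square is a proper Zariski-closed condition), so $q$ is irreducible in $\bar K[x_0,\ldots,x_{n+2}]$ and hence $Q$ is integral. For $\bar X$ itself, I would apply the Jacobian criterion to the complete intersection $\{H=F=0\}$: for very general $c,g,M,L,f$ the geometric generic fiber $\bar X$ is smooth (and of the expected dimension $n+1$), hence reduced; it is then connected by the Lefschetz theorem for smooth complete intersections in $\bb P^{n+3}$ of positive dimension, and thus integral.

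The main obstacle I anticipate is the transition from the hypersurface $\tilde Q$ produced by elimination to the hypersurface $Q$ in the statement, where the factor of $t$ on the $Lf$-term has been dropped. One has to argue cleanly that over $\bar K$ the two cut out varieties related by this scalar in one term are in the same birational (indeed isomorphism) class after a suitable change of coordinates, or equivalently that the lemma's $L$ should be read as the polynomial $tL$ with $\bar K$-coefficients. Once this is accepted, birationality of $\bar X$ and $Q$ follows immediately from the explicit open isomorphism $U\isoto Q\cap\{x_{n+2}\neq 0\}$, and the remaining work is the verification of integrality carried out in the previous paragraph.
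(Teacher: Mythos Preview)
Your birationality argument (projection from $(0:\cdots:0:1)$ and solving $F=0$ for $x_{n+3}$ on $\{x_{n+2}\neq 0\}$) is exactly the paper's approach, and your discriminant computation for the irreducibility of $q$ is the same as well. Your remark about the stray factor $t$ is a fair observation.

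There is, however, a genuine gap in your argument for the integrality of $\bar X$: the claim that $\bar X$ is smooth for very general $c,g,M,L,f$ is \emph{false}. The point is that $c,g,M,L,f\in k[x_0,\dots,x_{n+1}]$ do not involve $x_{n+2},x_{n+3}$, so $H$ and $F$ are far from general polynomials in $\bb P^{n+3}$. Concretely, the point $P=(0:\cdots:0:1)\in\bb P^{n+3}_{\bar K}$ (only $x_{n+3}\neq 0$) always lies on $\bar X$: since $\deg c\ge 1$, $\deg L\ge 2$ and $\deg f=2$, one has $H(P)=g(0)c(0)+M(0)\cdot 0+L(0)\cdot 1=0$ and $F(P)=0\cdot 1+tf(0)=0$. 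Moreover $\nabla H(P)=0$: indeed $\partial_{x_{n+2}}H=M$ and $\partial_{x_{n+3}}H=L$ both vanish at $P$ (degrees $\ge 2$ in $x_0,\dots,x_{n+1}$), and for $i\le n+1$ one has $\partial_{x_i}H(P)=\partial_i(gc)(0)+0\cdot\partial_iM(0)+1\cdot\partial_iL(0)=0$ because $c,\partial_ic,\partial_iL$ all vanish at the origin. Thus the Jacobian of $(H,F)$ has rank at most $1$ at $P$, and $\bar X$ is singular there for \emph{every} choice of the data. So your smoothness-plus-Lefschetz route cannot work as stated.

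The paper bypasses smoothness entirely. Having already shown that $Q$ is integral, it notes that $\bar X$ is equidimensional of dimension $n+1$ (it is cut out by two equations in $\bb P^{n+3}_{\bar K}$), while $\bar X\cap\{x_{n+2}=0\}=\{H=f=x_{n+2}=0\}$ has dimension $n$; hence no component of $\bar X$ is contained in $\{x_{n+2}=0\}$. Since $\bar X\cap\{x_{n+2}\neq 0\}$ is isomorphic to a nonempty open of the integral scheme $Q$, it follows that $\bar X$ is integral. You should replace your smoothness argument by this dimension count.
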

\begin{proof}
To see that the two $\bar K$-schemes are birational to each other restrict to the principal open $\mathrm{D}_+( x_{n+2}) \subset \mathbb{P}^{n+3}_{\bar K}$, resp. $\mathrm{D}_+( x_{n+2}) \subset \mathbb{P}^{n+2}_{\bar K}$.
We first show that $Q$ is integral. For this it suffices to prove that the homogeneous polynomial 
\begin{equation*}
    q(x_0,...,x_{n+1},x_{n+2})\in \bar K[x_0,...,x_{n+1},x_{n+2}]=A[x_{n+2}],
\end{equation*}
where $A=\bar K[x_0,...,x_{n+1}]$, is irreducible. As $A$ is a UFD and $q$ is primitive (by the generality of the coefficients), its irreducibility follows from Gauss's lemma and the fact that the discriminant 
\begin{equation*}
    \Delta_q=(cg)^2+4MLtf\in \mathrm{Quot}(A)
\end{equation*}
is not a square (by the generality of the coefficients and the assumption $\mathrm{char}(k)\neq 2$). To see that $\bar X$ is integral, it suffices to see that the closed subscheme $ \{x_{n+2}=0\}\subset \bb P^{n+3}_{\bar K}$ does not contain an irreducible component of $\bar X$. This is so because $\bar X$ is equidimensional of dimension $n+1$ and $\{F=H=x_{n+2}\}$ is of dimension $n$. 
\end{proof}

\begin{definition}\label{Definition_W_X2}
    Let $$W_\cal X:=  \{MLfg=0\}.$$
    Denote by $W_Y$ the special fiber and by $W_X$ (resp. $W_{\bar{X}}$) the generic (resp. geometric generic) fiber of $W_\cal X$. The complements are denoted as follows \begin{equation*}
        Y^\circ:=Y\setminus W_Y,\,\,Y_i^\circ:=Y_i\setminus W_Y,\,\,Z^\circ:=Z\setminus W_Y,\,\,X^\circ:=X_K\setminus W_X.
    \end{equation*}
\end{definition}
Let $\Lambda$ be a ring of positive characteristic such that the exponential characteristic of $k$ is invertible in $\Lambda$.
\begin{lemma}\label{Lemma_CH=0_2}
    $$\CH_1(Y_{0}^\circ\times_k L,\Lambda)=0=\CH_1(Y_{1}^\circ\times_k L,\Lambda)$$ 
    for any field extension $L/k$.
\end{lemma}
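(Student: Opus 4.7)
The plan is to mimic the proof of Lemma \ref{Lemma_CH=0}: instead of removing the hyperplane $\{x_0 = 0\}$, I would remove the hypersurface $\{M = 0\}$ from $Y_0$ (respectively the hypersurface cut out by the polynomial $L$ from $Y_1$) and then invoke Lemma \ref{Lemma_simple_Chow}. Concretely, $Y_0 = \cal V_+(Mx_{n+2} + gc) \subset \bb P^{n+2}_k$ fits the template of Lemma \ref{Lemma_simple_Chow} with $P = M$ and last variable $x_{n+2}$, yielding
$$Y_0 \setminus \{M = 0\} \cong \bb P^{n+1}_k \setminus \{M = 0\},$$
where $\bb P^{n+1}_k = \cal V_+(x_{n+2}, x_{n+3})$. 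The situation for $Y_1$ is symmetric, with $P$ replaced by the polynomial $L$ and last variable $x_{n+3}$.

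The main thing to check is the hypothesis of Lemma \ref{Lemma_simple_Chow} that $\{M = 0\} \subset \bb P^{n+1}_k$ contains a $k$-line. Since $M$ is a very general polynomial of degree $d-2$, the Fano scheme of lines on $\{M = 0\}$ has expected dimension $2n - (d-1) = 2n - d + 1$, which is positive under the standing hypotheses $n \geq 2$ and $d \leq n + 2$ (as $n + 2 \leq 2n + 1$ for $n \geq 1$). By the classical non-emptiness of Fano schemes of lines on smooth hypersurfaces of degree $d' \leq 2N - 3$ in $\bb P^N$, $\{M = 0\}$ contains a line over $k$, and the same holds for the vanishing locus of $L$. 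This verification is the main (if minor) obstacle, but it is standard.

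Granted the line-containment, Lemma \ref{Lemma_simple_Chow} yields $\CH_1(Y_0 \setminus \{M = 0\}) = 0$. Since $Y_0^\circ = Y_0 \setminus \{MLfg = 0\}$ is an open subscheme of $Y_0 \setminus \{M = 0\}$, the localisation sequence makes $\CH_1(Y_0^\circ)$ a quotient of zero, and tensoring with $\Lambda$ preserves this vanishing. For any field extension of $k$, the isomorphism of Lemma \ref{Lemma_simple_Chow} base changes and the $k$-line in $\{M = 0\}$ remains a line after extension, so the argument applies verbatim to give $\CH_1(Y_0^\circ \times_k L, \Lambda) = 0$. The treatment of $Y_1^\circ$ is entirely symmetric.
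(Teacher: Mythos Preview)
Your proposal is correct and follows essentially the same approach as the paper: both apply Lemma~\ref{Lemma_simple_Chow} with $P=M$ (resp.\ $P=L$) and verify that $\{M=0\}$ (resp.\ $\{L=0\}$) in $\bb P^{n+1}_k$ contains a line. The only difference is that the paper cites \cite[Lemma 2.9]{BeheshtiRoya2013} for the line, whereas you argue directly via the expected dimension of the Fano scheme; your extra remarks on passing to the open $Y_0^\circ\subset Y_0\setminus\{M=0\}$ and on base change simply make explicit what the paper leaves implicit.
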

\begin{proof}
    This follows immediately from Lemma \ref{Lemma_simple_Chow} since $\{L=0\}\subset \bb P_k^{n+1}$ and $\{M=0\}\subset \bb P_k^{n+1}$ both contain a line by \cite[Lem. $2.9$]{BeheshtiRoya2013}.
\end{proof}
\begin{lemma}\label{boundaries of family}
    The $k$-varieties $Y_0^\circ,\,Y_1^\circ,\,$ and $Z^\circ$ are smooth $k$-varieties of dimension $n+1$ and $n$.
\end{lemma}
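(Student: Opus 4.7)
The plan is to handle the three open subschemes separately, reducing each claim to a direct Jacobian computation together with Bertini-type transversality consequences of the very-general choice of $f,c,g,M,L$.

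For $Y_0^\circ$, I would set $F_0 := gc + Mx_{n+2}$ so that $Y_0 \subset \bb P^{n+2}_k$ is defined by $F_0=0$, and then observe that $\partial F_0/\partial x_{n+2} = M$ is nonvanishing on $Y_0^\circ$ (since $M$ divides the equation cutting out $W_\cal X$). Thus the Jacobian of $F_0$ has rank one at every point of $Y_0^\circ$, giving smoothness. Viewing $F_0$ as a polynomial in $x_{n+2}$ over $k[x_0,\dots,x_{n+1}]$, it is linear with coefficients $M$ and $gc$ coprime for generic $M,g,c$; Gauss's lemma then shows $F_0$ is irreducible, so $Y_0$ is an irreducible hypersurface of dimension $n+1$. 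Generality also guarantees that $\{MLfg=0\}\cap Y_0$ is a proper closed subset, so $Y_0^\circ$ is a nonempty open subset of $Y_0$ and has the same dimension $n+1$. The argument for $Y_1^\circ$ is entirely symmetric, using $\partial/\partial x_{n+3}$ applied to $gc+Lx_{n+3}$ and the fact that $L\neq 0$ on $Y_1^\circ$.

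For $Z^\circ$ the only mildly subtle point---and the one the definition of $W_\cal X$ is tailored to exploit---is that while $Z=\{cg=0\}\subset \bb P^{n+1}_k$ is reducible, the component $\{g=0\}$ lies entirely inside $W_Y=\{MLfg=0\}$. Hence $Z^\circ = \{c=0\}\setminus\{MLfg=0\}$. Since $c$ is a very general cubic, the hypersurface $\{c=0\}\subset \bb P^{n+1}_k$ is smooth of dimension $n$ by Bertini, and $Z^\circ$ is a nonempty open subset of it because $c$ is coprime to each of $M,L,f,g$ by generality. Therefore $Z^\circ$ is smooth of dimension $n$.

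I do not expect any serious obstacle here beyond the bookkeeping: the substantive observation is the vanishing of the $\{g=0\}$ component of $Z$ in $Z^\circ$, after which everything reduces to standard Jacobian and Bertini arguments under the very-general hypothesis on $f,c,g,M,L$.
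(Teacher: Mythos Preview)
Your proposal is correct and follows essentially the same approach as the paper: for $Y_i^\circ$ you differentiate with respect to $x_{n+2}$ (resp.\ $x_{n+3}$) to get $M$ (resp.\ $L$), and for $Z^\circ$ you make the key observation that removing $W_Y$ kills the $\{g=0\}$ component, leaving an open piece of the smooth very general cubic $\{c=0\}$. The paper's proof is terser but the ideas are identical; your additional remarks on irreducibility and nonemptiness are fine but not needed for the lemma as stated.
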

\begin{proof}
    Taking the derivative with respect to $x_{n+2}$ (resp. $x_{n+3}$) of the equation defining $Y_0$ (resp. $Y_1$), we get
    $$\partial_{x_{n+2}}(gc+Mx_{n+2})=M$$
    (resp. $\partial_{x_{n+3}}(gc+Lx_{n+3})=L$). To see that $Z^\circ$ is smooth, note that 
    \begin{equation*}
        Z^{\circ}=\{gc=0\}-W_Y\subset \bb P^{n+1}_k
    \end{equation*}
    is an open subscheme of the smooth cubic hypersurface $C:=\{c=0\}\subset \bb P_k^{n+1}$ and nonempty by the generality of the components of $W_Y$. 
\end{proof}
\begin{lemma}\label{singullar_locus_calX_2}
    The singular locus of $\cal X$ is contained in $\{Lf=0\}\subset \bb P^{n+3}_R$.
\end{lemma}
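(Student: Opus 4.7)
The plan is to analyse the Jacobian of $\cal X \subset \bb P^{n+3}_R$ and observe that $H$ is independent of the uniformizer $t$, which is a strong rigidity on how the rows of the Jacobian can become linearly dependent.

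First, I would write out the $2\times(n+5)$ Jacobian matrix of $(H,F)$ in the variables $x_0,\dots,x_{n+3},t$. The decisive entries are in the last column and the last two $x$-columns: since $H=gc+Mx_{n+2}+Lx_{n+3}$ with $g,c,M,L\in k[x_0,\dots,x_{n+1}]$, we have $\partial_t H=0$ and $\partial_{x_{n+3}}H=L$, $\partial_{x_{n+2}}H=M$; while $F=x_{n+2}x_{n+3}+tf$ gives $\partial_t F=f$, $\partial_{x_{n+2}}F=x_{n+3}$, $\partial_{x_{n+3}}F=x_{n+2}$. A point $p\in\cal X$ is singular precisely when this Jacobian has rank $\leq 1$ at $p$, i.e. when the rows $\nabla H(p)$ and $\nabla F(p)$ are $k$-linearly dependent.

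Now suppose $p\in\cal X^{\mathrm{sing}}$ satisfies $f(p)\neq 0$; I claim $L(p)=0$. Let $\alpha \nabla F(p)+\beta\nabla H(p)=0$ be a nontrivial relation. Reading the $t$-component gives
\[
\alpha f(p)+\beta\cdot 0=\alpha f(p)=0,
\]
so $\alpha=0$, and therefore $\beta\neq 0$ and $\nabla H(p)=0$. In particular, $L(p)=\partial_{x_{n+3}}H(p)=0$. Thus whenever $f(p)\neq 0$ we must have $L(p)=0$, which is exactly the inclusion $\cal X^{\mathrm{sing}}\subset\{Lf=0\}\subset\bb P^{n+3}_R$.

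This argument is really just a one-line observation, so I do not expect any obstacle; the only thing to be slightly careful about is that the linear dependence is considered in $k(p)^{n+5}$, viewing the Jacobian as a matrix in the affine chart containing $p$, and that the conclusion $\nabla H(p)=0$ uses genuine linear (not just projective) dependence of the rows, which is what singularity of a complete intersection in $\bb P^{n+3}_R$ means. The reader can compare this with the parallel computation in Lemma \ref{singullar_locus_calX}, where the analogous vanishing $\partial_t H=0$ was used to localise the singular locus onto $\{t=0\}\cap\{f_\alpha=0\}$.
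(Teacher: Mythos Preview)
Your argument is correct and coincides with the paper's: both compute the Jacobian of $(F,H)$ in the variables $x_0,\dots,x_{n+3},t$ and exploit that $\partial_t H=0$ while $\partial_t F=f$ and $\partial_{x_{n+3}}H=L$. The paper simply displays the Jacobian and says ``the claim follows''---implicitly from the $2\times 2$ minor in the $(x_{n+3},t)$-columns, which equals $-Lf$---whereas you phrase the same observation via linear dependence of the rows; these are equivalent formulations of the same one-line computation.
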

\begin{proof}
 The singular locus of $\cal X$ is given by the vanishing of the equations defining $\cal X$ and the locus where all minors of the Jacobian vanish. 
 Set $x=(x_0,...,x_{n+1})$. The Jacobian of $\cal X$ is given by the following matrix
\begin{align*}
Jac\,\cal X=
\begin{pmatrix} \nabla F \\ \nabla H \end{pmatrix}&=
\begin{pmatrix}
\displaystyle \frac{\partial F}{\partial x} &
\displaystyle \frac{\partial F}{\partial x_{n+2}} &
\displaystyle \frac{\partial F}{\partial x_{n+3}} &
\displaystyle \frac{\partial F}{\partial t}
\\[10pt]
\displaystyle \frac{\partial H}{\partial x} &
\displaystyle \frac{\partial H}{\partial x_{n+2}} &
\displaystyle \frac{\partial H}{\partial x_{n+3}} &
\displaystyle \frac{\partial H}{\partial t}
\end{pmatrix}
\\
\\
&=\begin{pmatrix}
\,t\,\nabla_{x} f &
t\partial_{x_{n+2}}f+ x_{n+3} &
t\partial_{x_{n+3}}f+ x_{n+2} &
\,f
\\[6pt]
\nabla_{x} cg+x_{n+2}\nabla_x M+ x_{n+3}\nabla_x L &
\partial_{x_{n+2}}H &
\partial_{x_{n+3}}H  &
0
\end{pmatrix}.
\end{align*}
and the claim follows.
\end{proof}
\begin{lemma}\label{Lemma_semi_stability_2}
    $\cal X^\circ=\cal X\setminus W_{\cal X}$ is strictly semistable.
\end{lemma}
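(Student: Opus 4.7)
The plan is to mirror the proof of Lemma \ref{Lemma_semi_stability}, verifying the three conditions in the definition of a strictly semistable scheme (see \cite[Def. 1.1]{Hartl2001}): regularity of the total space, smoothness of the generic fiber, and the property that the special fiber is a sum of Cartier divisors whose components meet in an snc pattern.

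First I would invoke Lemma \ref{boundaries of family} to conclude that the two components $Y_0^\circ$ and $Y_1^\circ$ of the special fiber $Y^\circ = Y \setminus W_Y$, as well as their intersection $Z^\circ$, are smooth over $k$. Since $Y^\circ$ has only two components, triple intersections are automatically empty, so $Y^\circ$ is an snc scheme with no triple intersections. Next, by Lemma \ref{singullar_locus_calX_2}, the singular locus of $\cal X$ is contained in $\{Lf=0\}$, which is in turn contained in $W_\cal X = \{MLfg = 0\}$. Hence $\cal X^\circ$ is regular. It follows that the generic fiber $X_K^\circ$, being an open subset of the regular scheme $\cal X^\circ$, is itself regular and therefore smooth over $K$ (since $\mathrm{char}(k) = 0$).

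Finally, since Weil divisors on a regular scheme are Cartier, $Y_0^\circ$ and $Y_1^\circ$ are Cartier divisors on $\cal X^\circ$. Combining this with the snc property of $Y^\circ$ and the smoothness of the generic fiber, the three conditions of \cite[Def. 1.1]{Hartl2001} are satisfied, and $\cal X^\circ$ is strictly semistable.

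The argument is essentially a direct assembly of the previous two lemmas, so no real obstacle is expected. The only point requiring care is that the containment $\{Lf=0\} \subset W_\cal X$ in Lemma \ref{singullar_locus_calX_2} suffices to remove the entire singular locus of the total scheme (not just the singular locus of the special fiber), which is exactly the form in which that lemma is stated.
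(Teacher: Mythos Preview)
Your proof is correct and follows essentially the same route as the paper's own argument: invoke Lemma~\ref{boundaries of family} for the snc property of $Y^\circ$, use Lemma~\ref{singullar_locus_calX_2} together with $\{Lf=0\}\subset W_{\cal X}$ to get regularity of $\cal X^\circ$, and then pass from Weil to Cartier divisors. Your explicit justification of the smoothness of the generic fiber (via regularity and characteristic zero) is a detail the paper's terser proof of this lemma leaves implicit.
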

\begin{proof} (Cf. \cite[Prop. 5.9, Step 1]{LangeSchreieder2024})
By Lemma \ref{boundaries of family}, $Y^\circ$ is an snc scheme. Since by Lemma \ref{singullar_locus_calX_2} $\cal X^{\mathrm{sing}}\subset W_{\cal X}$ the total space $\cal X^\circ$ is regular. 
Since on a regular scheme Weil divisors are Cartier, $Y_0^\circ,Y_1^\circ$ are Cartier divisors on $\cal X^\circ$. Therefore, by definition (see \cite[Def. 1.1]{Hartl2001}), $\cal X^\circ=\cal X\setminus W_{\cal X}$ is strictly semistable.  
\end{proof}
\begin{proof}[Proof of Thm. \ref{Theorem_>3_hypersurfaces}]
By \cite[Lem. 2.2]{Totaro2016} we can assume that $4\leq d\leq n+2$. Let $\cal X$ be as in Definition \ref{def_degenration_family_for_(d)}.
Let $$m:=\Tor^\Lambda(\bar X,W_{\bar X}).$$
 Since by Lemma \ref{(2,d)birational(d)} the generic fiber $X_K$ is geometrically integral and by Lemma \ref{Lemma_semi_stability_2} the $R$-scheme $\cal X^\circ$ is strictly semistable, and its special fiber has no triple intersections, we can apply
    Theorem \ref{Theorem_LS_4.1}. Theorem \ref{Theorem_LS_4.1} 
    then says that the cokernel of the map 
$$\Psi_{Y_L^\circ}^\Lambda:\CH_1(Y_0^\circ\times_k L,\Lambda)\oplus \CH_1(Y_1^\circ\times_k L,\Lambda)\to \CH_0(Z^\circ\times_k L,\Lambda)$$
    is $m$-torsion for every field extension $L/k$.
    In particular, $$m\delta_{Z}|_{Z^\circ}=\gamma_0|_{Z^\circ}-\gamma_1|_{Z^\circ}$$
    for some $\gamma_i\in \CH_1(Y_i^\circ\times_k L,\Lambda)$ and $L=k(Z^\circ)$. Since by Lemma \ref{Lemma_CH=0_2}, the source of $\Psi_{Y_L^\circ}^\Lambda$ is zero, we get that $m\delta_{Z}|_{Z^\circ}=0$.
Denote the smooth very general cubic by
\begin{equation*}
    C:=\{c=0\}\subset \bb P_k^{n+1}.
\end{equation*}
By our assumption on the diagonal and the essential dimension of $C$, we have that 
\begin{equation*}
    1<\Tor^{\bb Z}(C, C\cap W_Y)=\Tor^{\bb Z}( Z^{\circ},\emptyset).
\end{equation*} 
Set $\Lambda:=\bb Z/\Tor^{\bb Z}( Z^{\circ},\emptyset)$. By the same arguments as in the proof of Theorem \ref{Theorem_2_4_complete_int} this implies that $$1<\Tor^\bb Z(\bar X,W_{\bar X}).$$
Let 
\begin{equation*}
    Q:=\{q=0\}\subset \bb P^{n+2}_k
\end{equation*}
be the degree $d$ hypersurface defined in Lemma \ref{(2,d)birational(d)}.
Restricting the birational map $\phi:\bar X\dashrightarrow Q$ to $\bar X-W_{\bar X}$, we get that 
$$\Tor^\bb Z(\bar X,W_{\bar X})=\Tor^\bb Z(Q,Q-\phi (\bar X-W_{\bar X})).$$ 
 Note that we can restrict $\phi$ to $\bar X-W_{\bar X}$ because its indeterminacy locus $\mathrm{ind}(\phi)=\{x_{n+2}=0\}\cap \bar X$
    is contained in $W_{\bar X}$. Indeed, if $x_{n+2}=0$, then $0=\frac{x_{n+2}x_{n+3}}{t}=f$ (or, equivalently, if $f\neq 0$ on $\bar X$ fiber, then $F=0$ implies $x_{n+2}\neq 0$). By Lemma \ref{Lemma_LS_3.6}$(e)$ we may assume that $\bar K$ is uncountable.\\
    By \cite[Lem. 8]{Schreieder2019quadricbundles} there exists a very general (and therefore smooth) degree $d$ hypersurface $Q'$ which specializes to $Q$ in the sense of \cite[\S 2.2]{Schreieder2019quadricbundles}. I.o.w. there exists a proper flat surjective morphism $$f:\cal Y\to \Spec B$$ to an equicharacteristic DVR $B$ with residue field $\bar K$ such that $Q\cong \cal Y_{\bar K}$ and there exists an injection of fields $\Frac(B)\hookrightarrow \bar K$ (up to field isomorphism) with
    $$\cal{Y}_{\Frac B}\times\bar{K}\cong  Q'.$$
    Let $B^h$ be the henselisation of $B$ and $f':\cal Y'\to \Spec B^h$ the base change of $f$ along $\Spec B^h\to \Spec B$. Then $Q$ is still isomorphic to $\cal Y_{\bar K}'$ and we denote the generic fiber of $\cal Y'$ by $Q''$. Let $P$ be a closed point of $Q-\phi (\bar X-W_{\bar X})$ and $T\subset \cal Y'$ a lift of $P$ over $\Spec B^h$. Setting $W_{\cal Y'}:=T\cup Q-\phi (\bar X-W_{\bar X})$, \cite[Lem. 3.8]{LangeSchreieder2024} implies that
    $$\Tor^\bb Z(Q,Q-\phi (\bar X-W_{\bar X}))|\Tor^\bb Z(\cal Y'_{\Frac(B^h)}\times \overline{\Frac(B^h)}),T\times \overline{\Frac(B^h)}).\footnote{ Alternatively, instead of passing to the henselisation, we could choose $\dim Q$ general hyperplane sections through a closed point of $Q-\phi (\bar X-W_{\bar X})$ and lift these to a closed subset $T\subset \cal Y$ of relative dimension zero over $B$. In order for $T$ to specialize to $Q-\phi (\bar X-W_{\bar X})$, we would then have to add a set of closed points $S_K$ to $W_{\bar X}$ by modifying $W_{\cal X}$ in Definition \ref{Definition_W_X2} accordingly (cf. \cite[Proof of Thm. 7.1]{LangeSchreieder2024}).}$$
     Finally, since $\deg:\CH_0(Q'')\to \bb Z$ is an isomorphism ($Q''$ is Fano over an algebraically closed field),  by Lemma \ref{Lemma_LS_3.6}(b) $\Tor(Q'')=\Tor^\bb Z(Q'',P)$, where $P$ is the closed point corresponding to $T$. I.o.w., $Q''$ does not admit a decomposition of the diagonal. Considering $Q''$ as a scheme over $\bar K$ via an isomorphism $\bar K\cong \overline{\Frac(B^h)}$, we have found a, and therefore a very general, smooth degree $d$ hypersurface which does not admit a decomposition of the diagonal. 
     
\end{proof}
\begin{corollary} Let $k=\bb C$, $n\geq 3$ odd (or $n=4$) and $d\geq 4$. Assume that a very general cubic hypersurface in $\bb P_k^{n+1}$ does not admit a decomposition of the diagonal. Then, a very general hypersurface of degree $d$ in $\bb P^{n+2}_k$ does not admit a decomposition of the diagonal and is therefore not retract rational.
\end{corollary}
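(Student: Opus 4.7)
The plan is to combine Theorem \ref{Theorem_>3_hypersurfaces} with Voisin's computation of the essential dimension of cubic hypersurfaces (Theorem \ref{known_ess_dim}(i)). The corollary's hypothesis only asserts the absence of a decomposition of the diagonal for a very general cubic hypersurface $C\subset \bb P^{n+1}_k$; to feed this into Theorem \ref{Theorem_>3_hypersurfaces} we also need the equality $\essdim C=\dim C$. This is precisely what Theorem \ref{known_ess_dim}(i) supplies in the allowed dimensions, namely $n\geq 3$ odd or $n=4$.

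First, I would invoke Theorem \ref{known_ess_dim}(i): since by assumption a very general cubic hypersurface $C\subset \bb P^{n+1}_k$ does not admit a decomposition of the diagonal, and $n$ is odd with $n\geq 3$ or $n=4$, Voisin's theorem yields $\essdim C=\dim C$. Second, I would apply Theorem \ref{Theorem_>3_hypersurfaces} (whose hypotheses on $k=\bb C$, $d\geq 4$ and $n\geq 2$ are all satisfied) to conclude that a very general hypersurface of degree $d$ in $\bb P^{n+2}_k$ does not admit a decomposition of the diagonal.

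Finally, I would note that the non-existence of a decomposition of the diagonal implies non-retract rationality: this is standard, since a retract rational smooth proper variety admits a (universal $\bb Z$-)decomposition of the diagonal, as used repeatedly throughout the paper (cf.\ the analogous conclusion in Corollary \ref{Corollary_main_Intro} and Theorem \ref{Theorem_2_intro}).

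There is essentially no obstacle: the corollary is a direct plug-in of Voisin's result on essential dimension into Theorem \ref{Theorem_>3_hypersurfaces}. All of the real work, specifically the degeneration of Definition \ref{def_degenration_family_for_(d)}, the semistability analysis (Lemmas \ref{singullar_locus_calX_2} and \ref{Lemma_semi_stability_2}), the birational identification with a degree $d$ hypersurface (Lemma \ref{(2,d)birational(d)}), and the obstruction-map argument reducing the torsion order of $\bar X$ to that of the cubic $C$, has already been carried out in the proof of Theorem \ref{Theorem_>3_hypersurfaces}.
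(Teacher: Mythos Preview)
Your proposal is correct and matches the paper's own proof, which simply cites Theorem \ref{Theorem_>3_hypersurfaces} together with Theorem \ref{known_ess_dim}(i). Your additional remark on non-retract rationality is standard and implicit in the paper's statement.
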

\begin{proof}
    The statement follows from Theorem \ref{Theorem_>3_hypersurfaces} and Theorem \ref{known_ess_dim}(i). 
\end{proof}

\bibliographystyle{acm}
\bibliography{Bibliografie}

\noindent
\parbox{0.5\linewidth}{
\noindent
Elia Fiammengo \\ 
Universität Heidelberg\\
Mathematisches Institut \\
Im Neuenheimer Feld 205 \\
69120 Heidelberg \\
Germany\\
{\tt 	elia.fiammengo@stud.uni-heidelberg.de}
}
\\
\newline

\noindent
\parbox{0.5\linewidth}{
\noindent
Morten L\"uders \\ 
Universität Heidelberg\\
Mathematisches Institut \\
Im Neuenheimer Feld 205 \\
69120 Heidelberg \\
Germany\\
{\tt mlueders@mathi.uni-heidelberg.de}
}

\end{document}